\newcommand\preq{\stackrel{\scriptsize \mathclap{\mbox{d}}}{=}}
\newcommand\prsim{\stackrel{\scriptsize \mathclap{\mbox{d}}}{\sim}}
\newcommand\mn[1]{
}
\newcommand\omn[1]{
}
\newtheorem{Lemma}{Lemma} 
\newtheorem{Theorem}{Theorem} 
\newtheorem{Corollary}{Corollary}
\theoremstyle{definition}
\theoremstyle{remark} 
\newtheorem{Remark}{Remark} 
\newtheorem{Example}{Example}
\newcommand{\E}{\mathbb{E}}
\begin{document}

\title{On Sibuya-like  distributions in branching and birth-and-death processes}
\author[L.B. Klebanov]{Lev B. Klebanov}
\address{Department of Probability and Mathematical Statistics\\ 
Faculty of Mathematics and Physics of the Charles University\\
Sokolovská 49/83, 186 75 Prague, Czech Republic}
\email{Lev.Klebanov@mff.cuni.cz}
\author[M. \v{S}umbera]{Michal \v{S}umbera}
\address{Nuclear Physics Institute\\ Czech Academy of Sciences\\ 25068 \v{R}e\v{z}, Czech Republic}
\email{sumbera@ujf.cas.cz}

 	\date{\today}
\maketitle

\begin{abstract}
We report some properties of heavy-tailed Sibuya-like distributions related to thinning, self-decomposability and  branching processes. 
Extension of the thinning operation of on-negative integer-valued random variables to scaling by arbitrary positive number leads to a new class of probability distributions with generating function $Q(w)$ expressible as a Laplace transform $\varphi(1-w)$ and  probability mass function $p_n$ satisfying  simple one step recurrence relation  between $p_{n+1}$ and $p_n$. We show that the compound Poisson-Sibuya and the shifted Sibuya distributions belong to this class. 
Using the fact that the same Markov property is present in stationary solutions of the birth and death equations we identify the Sibuya distribution and some of its variants as particular solutions of these equations. 
We also establish condition when integer-valued non-negative heavy-tailed random variable has finite $r$-th absolute moment ($0 < r < a < 1$).

\end{abstract}

\date{today}

\maketitle

\section{Self-decomposability and scaling of non-negative integer-valued random variables}
Infinitely divisible  distributions  \cite{Feller1,JKK2005,StvH2004} play nowadays an important role in several parts of the probability theory. They are also frequently used by the models based on the sum of several independent quantities with the same distribution because its infinite divisibility appears to be a convenient assumption \cite{StvH2004}. In the simplest case of the non-negative integer random variable (rv) $Y\in \mathbb{Z}_+$ the condition of  infinite divisibility means that  $\forall n\in\mathbb{N}$ it can be written as a random sum $Y_n=X_1+X_2 \ldots +X_n$ of independent identically distributed (iid)  rvs $X_i\in \mathbb{Z}_{+}, i=1,n$. Consequently, if $\mathbb P(N=n)=p_n$  is the probability mass function (pmf) and $Q(w)=\sum_n p_n w^n$ the probability generating function (pgf) of the rv $N$ and $G(w)=\mathbb{E}[w^{X_i}]$ is the pgf of the rvs $X_i$ then the $n$th root of the compound pgf 
\begin{equation}
\label{eq:comp}
H(w)=G(Q(w)):= G\circ Q
\end{equation}
is $\forall n$ also the pgf. This is equivalent to saying that $H(w)$ can be expressed as the pgf of compound Poisson distribution \cite{Feller1}: 
\begin{equation}
\label{eq:cpd}
H(w)=G\circ Q=P\circ R\,,~~ P(\lambda,w)=e^{-\lambda(1-w)}\,,~
R(w)=1+\frac{\log H(w)}{\lambda}
\end{equation}
where $P(w)$ is the pgf of the Poisson distribution and $R(w)$ is the pgf of positive discrete rv with $R(0)=0$.

Important class of the infinitely divisible distributions on $\mathbb{Z}_+$ are those which are self-decomposable \cite{StvH2004}. Let us recall that discrete random variable $X\in \mathbb{Z}_{+}$ with the pgf $G(w)$  is called self-decomposable if  it can be written as a sum of two independent variables \cite{StvH2004}
\begin{equation}
\label{seldec}
  X = a \odot X + Y_{a};\;\;\; \forall a \in (0,1). 
\end{equation} 
 The thinning operator $\odot$ 	\cite{Renyi1956, StvH79} is defined in the following way
	\begin{equation}
		a \odot X:=   \sum_{i=1}^X Z_i,  ~Z_i \preq \mathcal{B}_{a}, ~ 
		G_{a \odot X}(w)\!:=\! G(\mathcal{B}_{a}(w))\!=\!G(1\!- \! a (1\! -\! w)) \label{eq:thin}
	\end{equation}
where $\preq$ means equality in distribution,  $\mathcal{B}_{a}$ is two-valued Bernoulli-distributed random variable with $p_0=1-a$ and $p_1=a$ and $\mathcal{B}_{a}(w)$ is its pgf.  In terms of pgfs  Eq.~(\ref{seldec})  reads:
\begin{equation}
\label{eq:seldpgf}
G(w)= G(\mathcal{B}_{a}(w))H_a(w)\, ,    
\end{equation} 
where $H_a(w)$ is the pgf of the rv $Y_a$. It can be shown that the function $H_a(w)$ is absolutely monotone and 
infinitely divisible \cite{StvH2004}. Obviously if $G_1(w)$ and $G_2(w)$ are self-decomposable pgfs then their product $G_1(w)G_2(w)$ is also self-decomposable pgf.
More general definitions of thinning operators were given in \cite{KlSl, Kl2021}.

The importance of the thinning operation (\ref{eq:thin}) for the rvs with finite mean is explained by the following theorem. 
\begin{Theorem}
\label{eq:binrand}
Let  $Q(w)$ be a pgf of the rv $X$ such that $\E X=Q^{(1)}(1)<\infty$ and with scaled factorial moments $F_j$ of order $j$
\begin{equation}
\label{eq:Fj}
F_j := \frac{\E X^{(j)}}{(\E X)^j}=\frac{\E X(X-1)\ldots (X-j+1)}{(\E X)^j} = \frac{Q^{(j)}(1)}{(Q^{(1)}(1))^j}, ~j\!=\!1,2,3,\ldots .
\end{equation}
Then the rv $Y=a\odot X$ with the pgf $G(w)=Q_{a \odot X}(w)$ has identical scaled factorial moments as $X$.
 \end{Theorem}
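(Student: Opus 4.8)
The plan is to write down the probability generating function of $Y=a\odot X$ explicitly, differentiate it $j$ times at $w=1$, and observe that the scaling factor it picks up cancels in the ratio defining $F_j$. By the definition of the thinning operator in~(\ref{eq:thin}), the pgf of $Y$ is
\begin{equation*}
G(w)=Q_{a\odot X}(w)=Q\bigl(\mathcal{B}_a(w)\bigr)=Q\bigl(1-a(1-w)\bigr).
\end{equation*}
The decisive observation is that the inner argument $u(w):=1-a(1-w)$ is an \emph{affine} function of $w$, with constant derivative $u'(w)=a$ and with $u(1)=1$, which makes the composition trivial to differentiate repeatedly.

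First I would note that, since all derivatives of $u$ of order $\ge 2$ vanish, the chain rule collapses to a single term and gives $G^{(j)}(w)=a^{j}\,Q^{(j)}\bigl(u(w)\bigr)$ for every $j\ge 1$ at interior points of $(0,1)$; passing to the one-sided limit $w\uparrow 1$ and using $u(1)=1$ yields
\begin{equation*}
G^{(j)}(1)=a^{j}\,Q^{(j)}(1),\qquad j=1,2,3,\ldots,
\end{equation*}
with the understanding that the two sides are finite or infinite together. In particular $\E Y=G^{(1)}(1)=a\,\E X<\infty$, so the scaled factorial moments of $Y$ are well defined and the denominator below is nonzero.

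Then, inserting this into the definition~(\ref{eq:Fj}) written for $Y$,
\begin{equation*}
F_j^{Y}=\frac{G^{(j)}(1)}{\bigl(G^{(1)}(1)\bigr)^{j}}=\frac{a^{j}\,Q^{(j)}(1)}{\bigl(a\,Q^{(1)}(1)\bigr)^{j}}=\frac{Q^{(j)}(1)}{\bigl(Q^{(1)}(1)\bigr)^{j}}=F_j^{X},
\end{equation*}
which is exactly the assertion. The argument is a one-line consequence of the linearity of $\mathcal{B}_a(w)$ in $w$; the only point that deserves a word of justification is term-by-term differentiation of the power series up to the boundary point $w=1$, which follows from Abel's theorem together with the hypothesis $\E X<\infty$. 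I do not anticipate any real obstacle here: the content of the theorem is simply that thinning rescales the mean by $a$ and the $j$-th factorial moment by $a^{j}$, and these two scalings are precisely matched in the normalization defining $F_j$.
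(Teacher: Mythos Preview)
Your proof is correct and follows essentially the same route as the paper: compute $G^{(j)}(w)=a^{j}Q^{(j)}(1-a+aw)$ from the affine inner function, evaluate at $w=1$, and observe that the factor $a^{j}$ cancels in the ratio $G^{(j)}(1)/(G^{(1)}(1))^{j}$. The only difference is that you spell out the chain-rule justification and the boundary passage via Abel's theorem, which the paper leaves implicit.
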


\begin{proof}
\begin{equation}
  G^{(j)}(w)=a^j Q^{(j)}(1 - a + a w); \;\;\;
 \frac{G^{(j)}(1)}{(G^{(1)}(1))^j} = \frac{a^jQ^{(j)}(1)}{(a Q^{(1)}(1))^j} = F_j
\end{equation}
\end{proof}

\begin{Example}
\label{ex:nbdfact}
Consider rv $X$ with $\langle n \rangle:=\E X< \infty$  which has the negative binomial distribution (NBD) with the pgf
\begin{equation} 
\label{eq:nbdpgf}
Q_{NBD}(w) =  \Bigl (1 + \frac{\langle n \rangle}{k}(1-w) \Bigr)^{-k}; \;\; k>0.
\end{equation}
Its factorial moments $F_j= (k+j-1)_j/k^j$, where $(x)_n=\prod_{\ell=0}^{n-1}(x-\ell)$ is the falling factorial, do not depend on $\langle n \rangle$ and therefore its thinned version $Y=\odot X$  with the pgf
\begin{equation}
Q_{NBD}(\mathcal{B}_{a}(w))\!=\!  \Bigl( 1\!+\! \frac{\langle n \rangle}{k} \Bigl (1\!-\!\mathcal{B}_{a}(w) \Bigr)  \Bigr)^{-k} \! = \! \Bigl(1\!+\! \frac{ a\langle n \rangle}{k} (1\!-\!w)  \Bigr)^{-k}
\label{eq:nbdthn}
\end{equation}
has not only the same scaled factorial moments as the rv $X$ but is also form-invariant w.r.t. scaling  $\langle n \rangle \to a \langle n \rangle$.
\end{Example}

Returning back to the self-decomposable distributions we note, that there is only one distribution, discrete stable distribution (DSD), for which the infinitely divisible component $Y_a$ is also self-decomposable  
\begin{equation}
\label{eq:sfd}
X = a \odot X + Y_{a} = a \odot X + b \odot X\, ,~~ a^{\delta}+b^{\delta}=1, ~~\forall \delta \in(0,1)\, .
\end{equation}
 The  canonical form (\ref{eq:cpd}) of its pgf reads
\begin{eqnarray}
\label{eq:dsdpgf} \;\;
Q_{DSD}(\lambda,\gamma,w) = P\circ \mathcal{S}= e^{-\lambda(1-w)^{\gamma}};\;\;
\mathcal{S}(\gamma,w)=1-(1-w)^{\gamma},\;\;\gamma \in (0,1)\, ,
\label{eq:sbdpgf}
\end{eqnarray}
where $P(\lambda, w)$ and $\mathcal{S}(\gamma,w)$ are the pgfs of Poisson and Sibuya distributions \cite{art:Sibuya, Dev1993, Huillet2018}, respectively. 
	
Let us note that there is a wide class of self-decomposable pgfs which similarly to  Poisson (\ref{eq:cpd}), Bernouily (\ref{eq:seldpgf}), negative binomial (\ref{eq:nbdpgf}) and discrete stable distributions (\ref{eq:dsdpgf}) depend explicitly on the argument $1-w$, i.e. allow expansion of the type
\begin{equation}
 Q(w)=\sum_{n\geq 0} p_n w^n= 1+\sum_{n>0} b_n (1-w)^n \, , \;\;\;  \sum_{n>0} b_n > -1\, .
\end{equation}
 This category comprises also some less frequently used ones like the Mittag-Leffler \cite{Huillet2018} or Discrete Linnik   \cite{Dev1993,Huillet2018} distributions. 
 Rather general construction of such pgfs is given in \cite{Kl2021}. More precisely, let $\varphi (s)$ be the Laplace transform of a positive random variable $X$. Then 	
\begin{equation}
\label{eq:k1}
	Q(w)=\varphi (1-w) 
\end{equation}
is a probability generating function. 

The change of variable $w = (1-a) +a z$ in (\ref{eq:k1}) gives us 
\begin{equation}
\label{eq:k2}
	Q(1-a+az)=\varphi (a(1-z)) .
\end{equation}
In other words, application of the thinning operator $w \rightarrow 1-a+az$ is equivalent to the scale change from $1$ to $a$ in the case of $a \in (0,1)$. We can apply the thinning operator to arbitrary positive integer random variable but for the case of $0<a<1$. However, $\varphi(as)$ remains to be Laplace transform for any positive $a$. Naturally, we came to the problem: {\it Describe all probability generating functions $Q(w)$ such that $Q(1-a+aw)$ is again the pgf for all $a>0$}. The solution is given by the following result.
\begin{Theorem}\label{thK}
Let $Q(w)$ be a probability generating function. $Q(1-a+aw)$ is the pgf for all $a>0$ if and only if there exists a Laplace transform of a positive random variable $\varphi (s)$ such that the representation (\ref{eq:k1}) holds. 
\end{Theorem}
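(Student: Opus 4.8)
The plan is to prove the two implications separately, using the function $\varphi(s):=Q(1-s)$ as the bridge between the two formulations: the substitution $s=1-w$ turns "$\varphi$ is a Laplace transform of a positive (i.e. non-negative) random variable'' into exactly the representation $Q(w)=\varphi(1-w)$ of (\ref{eq:k1}), so the theorem amounts to identifying the scaling property of $Q$ with complete monotonicity of $\varphi$.

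\emph{Sufficiency.} Here one simply computes. If $Q(w)=\varphi(1-w)$ with $\varphi$ the Laplace transform of a positive random variable $X$, then for every $a>0$
\[
Q(1-a+aw)=\varphi\bigl(1-(1-a+aw)\bigr)=\varphi\bigl(a(1-w)\bigr)=\varphi_a(1-w),
\]
where $\varphi_a(s):=\varphi(as)=\E e^{-s(aX)}$ is the Laplace transform of the positive random variable $aX$. By the construction recalled in (\ref{eq:k1}) (from \cite{Kl2021}), $\varphi_a(1-w)$ is a probability generating function, hence so is $Q(1-a+aw)$. (For $a\in(0,1)$ this is just the thinning operator and is automatic.)

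\emph{Necessity.} Assume $Q_a(w):=Q(1-a+aw)$ is a pgf for every $a>0$ and set $\varphi(s):=Q(1-s)$; the goal is to show $\varphi$ is completely monotone on $(0,\infty)$ with $\varphi(0^{+})=1$, after which Bernstein's theorem (the Hausdorff--Bernstein--Widder characterization of completely monotone functions) produces a Borel measure $\mu$ on $[0,\infty)$ with $\varphi(s)=\int_{[0,\infty)}e^{-sx}\,\mu(dx)$ and $\mu([0,\infty))=\varphi(0^{+})=1$, so that $\varphi$ is a probability Laplace transform and $\varphi(1-w)=Q(w)$ is the asserted representation. First I would upgrade the regularity of $Q$: a priori $Q$ is only analytic on $\{|w|<1\}$, but since each $Q_b$ is a pgf and hence analytic there, $Q$ (being $Q_b$ composed with the affine bijection $w\mapsto (w-(1-b))/b$) is analytic on the disk $D_b=\{|w-(1-b)|<b\}$; these disks are internally tangent to the line $\operatorname{Re} w=1$ at $w=1$, hence nested, their continuations agree (each restricts to $Q$ on $\{|w|<1\}$), and $\bigcup_{b>0}D_b=\{\operatorname{Re} w<1\}$, so $Q$ extends analytically to this half-plane. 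Then, expanding the pgf $Q_b$ about $w=0$, its $n$-th Taylor coefficient is $\tfrac{b^n}{n!}Q^{(n)}(1-b)$, a value of the pmf of $Q_b$ and hence non-negative; letting $b$ range over $(0,\infty)$ yields $Q^{(n)}(x)\ge 0$ for all $n\ge 0$ and all $x<1$. Since $\varphi^{(n)}(s)=(-1)^{n}Q^{(n)}(1-s)$, this is precisely $(-1)^{n}\varphi^{(n)}(s)\ge 0$ on $(0,\infty)$, and $\varphi(0^{+})=\lim_{w\to 1^{-}}Q(w)=1$ by Abel's theorem.

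The step I expect to be the main obstacle is exactly this regularity/extension argument: converting the bare hypothesis "$Q(1-b+bw)$ is a pgf'' into honest analyticity of $Q$ on $\{\operatorname{Re} w<1\}$ with every real derivative non-negative, so that the Taylor-coefficient formula for $Q_b$ is meaningful for all $b>0$ and not just for $b\in(0,2)$. A less global alternative, which I would fall back on if the half-plane continuation feels heavy, uses the semigroup identity $Q_{ab}=(Q_a)_b$ (indeed $Q_a(1-b+bw)=Q(1-ab+abw)$): for $a>2$ write $a=a_1\cdots a_m$ with each $a_i\in(1,2)$ and apply the single thinning step $m$ times, staying at every stage inside the disk of analyticity of the current pgf, so the chain rule delivers analyticity of $Q$ near $1-a$ together with $Q^{(n)}(1-a)\ge 0$. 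Once this is in place, the rest is routine: the normalization $\varphi(0^{+})=1$ via Abel's theorem and the classical Bernstein characterization of Laplace transforms of probability measures.
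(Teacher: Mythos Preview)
Your proof is correct and follows essentially the same route as the paper's: sufficiency via the observation that $\varphi(as)$ is again a Laplace transform, and necessity by showing $\varphi(s)=Q(1-s)$ is completely monotone through the non-negativity of the derivatives of the pgf $w\mapsto Q(1-a+aw)$. You are more careful than the paper on two points it leaves implicit---the analytic continuation of $Q$ to $\{\operatorname{Re}w<1\}$ (so that $Q^{(k)}(1-a)$ is meaningful for $a>2$) and the explicit appeal to Bernstein's theorem with the normalization $\varphi(0^{+})=1$---but the core argument is identical.
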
 
\begin{proof}
Without loss of generality we may consider the case $a>1$.

1. Suppose that $\varphi(s)$ is Laplace transform of a distribution function and
$Q(w)=\varphi(1-w)$ is probability generating function. However, $\varphi(as)$ with arbitrary $a>0$ is Laplace transform as well. According to the result from \cite{Kl2021} mentioned above, $\varphi(a(1-w))=Q(1-a+aw)$ is probability generating function for any $a>0$.

2. Suppose that $Q(1-a+aw)$ is probability generating function for any $a>0$.
Define $\varphi(s)=Q(1-s)$. It is necessary to proof that $\varphi(s)$ is Laplace transform of a distribution function or, equivalently, that it is absolutely monotone function. In other words we have to proof that
\begin{equation}
\label{eq:k3}
\varphi^{(k)}(s)=(-1)^k A_k(s), \quad k=0,1,\ldots , 
\end{equation}  
where the functions $A_k(s)$ are non-negative for $s>0$. For any $w\in (0,1)$ and any $a>0$ we have $\varphi(a(1-w)) = Q(1-a+aw)$. Therefore,
\begin{equation}\label{eq:k4}
\frac{d^{k}}{dw^{k}}\varphi(a(1-w))=(-1)^{k}a^k\varphi^{(k)}(a(1-w))=(-1)^{k}a^kQ^{(k)}(1-a+aw).
\end{equation} 
Because $Q(1-a+aw)$ is probability generating function for any $a>0$ the terms $a^kQ^{(k)}(1-a+aw)$ are non-negative for all $0<w<1$ and all positive $a$. It proves absolutely monotones of $\varphi(s)$. 
\end{proof}

\begin{Corollary}
\label{cor:g}
Let $\mathbb P(N=n)=p_n=Q^{(n)}(0)/n!\,, n\in\mathbb{Z}_{+}$ be a pmf of random variable $N$ with the pgf $Q(w)$ such that 
\begin{equation}
\label{eq:lapdens}   
Q(w)=\varphi (1-w) = \int_0^\infty e^{-(1-w)x}f(x) dx 
\end{equation}
where $\varphi(s)$ is the Laplace transform of a positive random variable $X$ with the  probability density $f(x)=dF_{X}(x)/dx,~F_{X}(x)=\mathbb P(X<x)$.  Then $p_n$ satisfies the one-step recurrence relation \begin{equation}
\label{eq:gnkl}
p_{n+1}=\frac{p_n}{n+1}g(n);\;\;\;\; 
g(n) = \frac{Q^{(n+1)}(0)}{Q^{(n)}(0)}= \frac{(n+1)p_{n+1}}{p_n}=\frac{\int_{0}^{\infty}e^{-x}x^{n+1}f(x) dx}{\int_{0}^{\infty}e^{-x}x^{n} f(x) dx}.
\end{equation}
\end{Corollary}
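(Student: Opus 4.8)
The plan is to read off both assertions from the Taylor expansion of $Q$ at $0$ together with the analyticity of the Laplace transform appearing in (\ref{eq:lapdens}); no new idea beyond Theorem~\ref{thK} is needed.

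First I would record the purely formal identity. By definition $p_n = Q^{(n)}(0)/n!$, hence $Q^{(n)}(0) = n!\,p_n$ for every $n\ge 0$, and dividing the relation for $n+1$ by the one for $n$ gives
\begin{equation*}
\frac{(n+1)p_{n+1}}{p_n} = \frac{(n+1)!\,p_{n+1}}{n!\,p_n} = \frac{Q^{(n+1)}(0)}{Q^{(n)}(0)} =: g(n),
\end{equation*}
which is exactly the claimed one-step recurrence $p_{n+1} = p_n\,g(n)/(n+1)$, provided $p_n \neq 0$ — a point I address next.

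Second, I would justify differentiating (\ref{eq:lapdens}) under the integral sign. Since $f\ge 0$ is a probability density on $(0,\infty)$, the map $s\mapsto \varphi(s)=\int_0^\infty e^{-sx}f(x)\,dx$ is completely monotone and real-analytic on $(0,\infty)$, and for each fixed $w<1$ the integrals $\int_0^\infty x^k e^{-(1-w)x}f(x)\,dx$ converge and are locally uniformly bounded in $w$; hence one may differentiate term by term to obtain
\begin{equation*}
Q^{(n)}(w) = \int_0^\infty x^n e^{-(1-w)x} f(x)\,dx, \qquad\text{so}\qquad Q^{(n)}(0) = \int_0^\infty x^n e^{-x} f(x)\,dx .
\end{equation*}
In particular each $Q^{(n)}(0)$ is strictly positive, its integrand being non-negative with $f$ not vanishing a.e.; equivalently $p_n = Q^{(n)}(0)/n! > 0$ for all $n$, so the division in the first step is legitimate. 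Substituting these integral expressions for $Q^{(n+1)}(0)$ and $Q^{(n)}(0)$ into the definition of $g(n)$ yields the final equality in (\ref{eq:gnkl}), finishing the argument.

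The computation is essentially routine, so I do not anticipate a genuine obstacle; the only places meriting a word of care are the interchange of $d/dw$ with $\int_0^\infty$ and the strict positivity of $Q^{(n)}(0)$, both of which are classical facts about Laplace transforms of positive random variables. In effect the corollary is just Theorem~\ref{thK} re-expressed at the level of power-series coefficients.
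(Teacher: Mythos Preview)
Your argument is correct and is exactly the computation the paper has in mind: the corollary is stated without a separate proof because differentiating the integral representation (\ref{eq:lapdens}) $n$ times at $w=0$ immediately gives $Q^{(n)}(0)=\int_0^\infty e^{-x}x^n f(x)\,dx$, and the remaining equalities in (\ref{eq:gnkl}) are just the definition $p_n=Q^{(n)}(0)/n!$ rearranged. One minor remark on framing: the corollary does not actually rely on Theorem~\ref{thK} itself (which characterises when $Q(1-a+aw)$ stays a pgf for all $a>0$), only on the Laplace-integral representation (\ref{eq:lapdens}); so your closing sentence slightly overstates the dependence, but this does not affect the mathematics.
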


Let us note that if pgf $Q(w)$ has the representation (\ref{eq:lapdens}) then for any $b \in (0,1]$ the function $Q(bw)/Q(b)$ is pgf and has the same representation with corresponding function $f_b(x)$. Namely,
\begin{equation}
\label{eq:lapdensa} 
 f_b(x) =e^{-(1-b)x/b}f(x/b)/Z(b)\,, ~~Z(b)= 
b\int_{0}^{\infty}e^{-(1-b)u}f(u)du. 
\end{equation}

It is also worth mentioning that the rv $Y\in \mathbb{Z}_+$ with the pgf $Q(w)$ satisfying Eq.~(\ref{eq:lapdens}) 
can be also interpreted as Poisson--distributed random variable  whose mean $\mathbb{E} Y=X$ fluctuates according to the probability density $f(x)$ or, equivalently, as a mixture of the Poisson pgfs $P(x,w)=e^{-x(1-w)}$ with the mixture weight $f(x)$. 
Let us give few examples.

\begin{Example}
\label{ex:poistr}
Consider pgf of the Hermite distribution \cite{JKK2005} $Q(w)=e^{-\lambda_1(1-w)}e^{-\lambda_2(1-w)^2}$ describing sum of two independent random variables $X_1$, $X_2$ having pgf $e^{-\lambda_1(1-w)}$ and $e^{-\lambda_2(1-w)^2}$ correspondingly. Values of $X_1$ are concentrated on $n=0,1,2,3\ldots$, of $X_2$ on even values only $n=0,2,4,6\ldots$. Hermite distribution is infinitely divisible. Its pmf satisfies recursion relation $(n\!+\!1)p_{n+1} \!=\! \lambda_1 p_n \!+ \! 2\lambda_2 p_{n-1}$. For $\lambda_2=0$ the recursion simplifies
to $(n\!+\!1)p_{n+1} \!=\! \lambda_1 p_n$ and  pgf $Q(w)$ can be obtained  from  Eq.~(\ref{eq:k1}) with $f(x)=\delta(x-\lambda_1)$. However, for $\lambda_2\neq0$ the density $f(x)$ does not exist.
On the opposite side is the notoriously known example of the negative binomial distribution with the pgf (\ref{eq:nbdpgf})
which can be obtained from Eq.~(\ref{eq:k1}) taking for $f(x)$ the gamma distribution density $f(x)=(\beta^{k}/\Gamma(k))x^{k-1}e^{-\beta x}$ with $\beta=(1-q)/q$. 
\end{Example}

\begin{Example}
\label{ex:dsd}
Let's compare  the pgf (\ref{eq:dsdpgf}) of discrete stable distribution with the Laplace integral \cite{Pollard1946-jy}
\begin{equation}
\label{eq:Pollard}
e^{-r^{\alpha}} = \int_0^\infty \!e^{-rx}g_{\alpha}(x)dx\,, ~~
g_{\alpha}(x)= \frac{1}{\pi} \sum^{\infty}_{j=1}\frac{(-1)^{j+1}}{j!x^{1+\alpha j}}\Gamma(1\!+\!\alpha j)\sin(\pi\alpha j)\, ,
\end{equation}
where $g_{\alpha}(x)$ is the probability density function of the one-sided continuous stable distribution. Exact and explicit expressions for $g_{\alpha}(x)$, for all $\alpha=l/k<1$, with $k$ and $l$ positive integers can be found in \cite{PenGo}. The substitutions $\gamma=\alpha$ and $r=\lambda^{-\gamma}(1-w)$  in  Eq.~(\ref{eq:Pollard}) enable us to express the pgf (\ref{eq:dsdpgf}) as the Laplace transform  (\ref{eq:lapdens}) of the function 
\begin{equation}
\label{eq:fdsd}
f(x)=\lambda^{\gamma}g_{\gamma}(\lambda^{\gamma}x).
\end{equation}
Hence the pmf of the discrete stable distribution satisfies the one step recurrence relation $(n+1)p_{n+1}=g(n) p_n$ with $g(n)$ given by Eq.~(\ref{eq:gnkl}) and $f(x)$ by Eq.~(\ref{eq:fdsd}). It is worth mentioning that since a closed-form expression for $p_n$ using elementary functions  is unknown the proof of this relation by the other methods is almost impossible.
\end{Example}

\begin{Example}
\label{ex:shsbd}
Consider random variable $Y\preq X+1$ where $Y\in \mathbb{N}$ has Sibuya distribution.
Then the rv $X \in \mathbb{Z}_{+}$ has shifted Sibuya distribution which is self-decomposable \cite{Christoph2000} and therefore also infinitely divisible \cite{StvH2004}.  Its pgf can be expressed via Eq.~ (\ref{eq:lapdens}) 
\begin{eqnarray}
\label{eq:sh_sbdpgf}
\mathcal{S}_0(\gamma,w)=\frac{\mathcal{S}(\gamma,w)}{w} = \frac{1-(1-w)^{\gamma}}{w} =
\int_0^\infty e^{-(1-w)x}f(x) dx; \\
 f(x)=-\frac{e^x \Gamma (-\gamma,x)}{\Gamma (-\gamma )}
\approx x^{-1-\gamma}\left(\frac{\gamma}{\Gamma(1-\gamma)}+\mathcal{O}\left(\frac{1}{x}\right)\right)\,,
\label{eq:sh_sbdfx}
\end{eqnarray}
where $\Gamma (a,x)$ is the incomplete gamma function with $\Gamma (a,0)=\Gamma (a)$. From Eq.~ (\ref{eq:gnkl}) we obtain
\begin{equation}
\label{g_shsbd}
g(n) = \frac{\int_{0}^{\infty}x^{n+1}\Gamma (-\gamma ,x) dx}{\int_{0}^{\infty}x^{n} \Gamma (-\gamma ,x) dx}
= \frac{(n+1)(n+1-\gamma)}{n+2}.
\end{equation}
\end{Example}
It is worth mentioning that the $r$-th absolute moment  of the probability density function $f(x)$ (\ref{eq:sh_sbdfx}) 
\begin{equation}
\int_0^{\infty}x^r f(x) dx = \frac{\sin (\pi  \gamma ) \Gamma (r+1)}{\sin (\pi  (\gamma - r ))} 
\end{equation}
diverges for $r>\gamma$. This well-known property of the continuous heavy-tailed  distributions is also shared by the family of discrete distribution represented by the Sibuya distribution. In the latter case the mean and higher moments $\langle n^r \rangle =Q^{(r)}(w=1),~ r \geq 1$ do not exist.  One may  na\"{i}vely expect that in this case also the Shannon entropy $S=-\sum_n p_n \log p_n$ of these distributions diverges. However, this is not the case. The entropy $S$ is finite either if $\langle \log n \rangle = \sum_{n=1}^{\infty}p_n \log n < \infty $ or if $\exists ~r> 0$ such that $\langle n^{r} \rangle = \sum_{n=1}^{\infty}p_n n^{r}< \infty$ \cite{Bac2013}.
The following theorem establishes the condition when  $\langle n^{r} \rangle< \infty, ~0<r<1$.
\begin{Theorem}
Integer-valued non-negative random variable $Z$ with probability generating function $Q(w)$ has finite $r$-th absolute moment $(0<r<a<1)$ iff 
 \begin{equation}
 \label{momint}
 \int_{0}^{1}\frac{1-Q(w)}{(-\log{w})^{1+r}w}dw < \infty
 \end{equation}
\label{momr}
\end{Theorem}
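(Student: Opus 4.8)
The plan is to show that, up to a finite strictly positive constant depending only on $r$, the integral in~(\ref{momint}) equals $\E|Z|^r=\sum_{n\ge 1}p_n n^r$; the stated equivalence then follows immediately, since the two quantities are simultaneously finite or simultaneously $+\infty$. The starting point is the expansion $1-Q(w)=\sum_{n\ge 1}p_n(1-w^n)$, valid for $w\in[0,1)$, together with the substitution $w=e^{-t}$, which maps $(0,1)$ onto $(0,\infty)$, turns $-\log w$ into $t$, and turns $dw/w$ into $-dt$.

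After this change of variable the left-hand side of~(\ref{momint}) becomes $\int_0^\infty t^{-1-r}\bigl(1-Q(e^{-t})\bigr)\,dt$. Substituting the series for $1-Q(e^{-t})$ and interchanging summation and integration --- legitimate by Tonelli's theorem, since $p_n\ge 0$ and $1-e^{-nt}\ge 0$ on $(0,\infty)$ --- one obtains $\sum_{n\ge 1}p_n\int_0^\infty t^{-1-r}(1-e^{-nt})\,dt$. The inner integral is computed by the rescaling $u=nt$: it equals $n^r C_r$ with $C_r:=\int_0^\infty u^{-1-r}(1-e^{-u})\,du$. An integration by parts (or recognition of a Gamma integral) gives the closed form $C_r=\Gamma(1-r)/r$, and inspection of the integrand shows that $C_r$ is finite and strictly positive precisely when $0<r<1$: integrability at $u=0$, where $1-e^{-u}\sim u$, forces $r<1$, while integrability at $u=\infty$ forces $r>0$. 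This is exactly where the hypothesis $0<r<a<1$ enters. Collecting everything,
\begin{equation}
\int_0^1\frac{1-Q(w)}{(-\log w)^{1+r}w}\,dw=\frac{\Gamma(1-r)}{r}\sum_{n\ge1}p_n n^r=\frac{\Gamma(1-r)}{r}\,\E|Z|^r,
\end{equation}
which proves the claim.

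I do not expect a genuine obstacle here; the only points that deserve a line of care are the Tonelli interchange (immediate from non-negativity) and the verification that $C_r$ neither vanishes nor diverges, so that dividing by it preserves finiteness in both directions, and so that the divergent case ($\E|Z|^r=+\infty$) also forces the integral to diverge. One may alternatively phrase the argument by first recording the elementary identity $n^r=\frac{r}{\Gamma(1-r)}\int_0^\infty t^{-1-r}(1-e^{-nt})\,dt$ for every $n\ge1$ and $0<r<1$, then multiplying by $p_n$ and summing; this is the same computation read in reverse order, and it makes the role of the heavy-tail restriction $r<1$ equally transparent.
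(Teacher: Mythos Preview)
Your proof is correct and follows essentially the same route as the paper's: both make the change of variable $w=e^{-u}$ to rewrite the integral as $\int_0^\infty u^{-1-r}\bigl(1-Q(e^{-u})\bigr)\,du$, and both then exploit the scaling identity $\int_0^\infty u^{-1-r}(1-e^{-nu})\,du = n^r\,(-\Gamma(-r))$ to recover $\E Z^r$ up to the positive constant $-\Gamma(-r)=\Gamma(1-r)/r$. The only cosmetic difference is that the paper phrases the identity first for a general non-negative $X$ via its Laplace transform and then specializes to $\varphi(u)=Q(e^{-u})$, whereas you expand $1-Q$ as the series $\sum_n p_n(1-w^n)$ and invoke Tonelli explicitly; the underlying computation is the same.
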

\begin{proof}
For continuous non-negative random variable $X$ with Laplace transform $\varphi(u)=\E e^{-uX}$ its $r$-th absolute moment $\E X^r, 0<r<1$ can be calculated using identity 
\begin{eqnarray}
\int_{0}^{\infty}\frac{1-e^{-Xu}}{u^{1+r}}du = X^r \int_{0}^{\infty}\frac{1-e^{-z}}{z^{1+r}}dz =X^{r}(-\Gamma(-r)); ~ z=Xu, ~ 0<r<1   \\
 \E X^r = \frac{1}{-\Gamma(-r)}\int_{0}^{\infty}\frac{1-\varphi(u)}{u^{1+r}}du.
 \end{eqnarray}
 Similarly, for the {\it integer-valued} non-negative random variable $Z$ with Laplace transform $\varphi(u)=Q(e^{-u})$ we have
\begin{equation} 
 \E Z^{r} =\frac{1}{-\Gamma(-r)} \int_{0}^{\infty}\frac{1-Q(e^{-u})}{u^{1+r}}du = \frac{1}{-\Gamma(-r)} \int_{0}^{1}\frac{1-Q(w)}{(-\log{w})^{1+r}w}dw.
 \end{equation}
\end{proof}

\begin{Example}
Consider $N$ extended objects--particles each of the same unit volume $v_0=1$. Particles are incompressible and densely packed occupying the total volume $V=N$ in three dimensional space.  If  $N$ is the Sibuya-distributed rv with the pgf $\mathcal{S}(\gamma,w)$  (\ref{eq:sbdpgf}) then  the integral (\ref{momint}) and hence the corresponding moments $\E N^r$ are finite only for $0<r<\gamma<1$. Consequently the  mean $\E V$ of the fluctuating volume $V\preq N$ is nonexistent. However, for $\gamma>1/2$ the mean value of the enclosing  surface $S \prsim N^{1/2}$ as well as of the linear extension $R \prsim N^{1/3}$, where  $\prsim$ means proportionality in distribution, of the 3d--space occupied by particles  are finite. Moreover for $1/2\geq \gamma > 1/3$ $\E S$ does not exist but $\E R<\infty$.
\end{Example}

Similarly to the pgfs of Bernoulli and geometric distributions Sibuya pgfs form a commutative semi-group under the operation of compoundig $\circ$
\begin{equation}
 \mathcal{S}(\gamma_1)\circ \mathcal{S}(\gamma_2)  =
 \mathcal{S}(\gamma_1\gamma_2)
\end{equation}

The following theorem establishes the additional similarity with  Bernoulli distribution.
\begin{Theorem} {\rm \cite{Sapa1995}}
Let $Q(w)$ be the  self-decomposable pgf  of the random variable $X$ on $\mathbb{Z}_{+}$ and  $\mathcal{S}(\gamma, w)$ the pgf of Sibuya distribution. Then $\forall \gamma \in (0,1)$  the function $G(w)= Q\circ \mathcal{S}(\gamma)$ is also a self-decomposible pgf  on $\mathbb{Z}_{+}$.
\label{sfdsbd}
\end{Theorem}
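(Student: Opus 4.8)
The plan is to reduce everything to the criterion already recorded in Eq.~(\ref{eq:seldpgf}): a pgf $G$ on $\mathbb{Z}_{+}$ is self-decomposable precisely when, for every $b\in(0,1)$, the ratio $G(w)/G(\mathcal{B}_{b}(w))$ is again a pgf (it is the factor $H_{b}(w)$ in Eq.~(\ref{eq:seldpgf}), whose absolute monotonicity and normalization $H_{b}(1)=1$ constitute self-decomposability). Thus I only have to produce, for each $b\in(0,1)$, a pgf $H_{b}^{G}$ with $G(w)=G(\mathcal{B}_{b}(w))\,H_{b}^{G}(w)$. That $G=Q\circ\mathcal{S}(\gamma)$ is itself a pgf on $\mathbb{Z}_{+}$ needs no argument, a composition of pgfs being the pgf of a compound distribution (cf.\ Eq.~(\ref{eq:comp})), with $G(1)=Q(\mathcal{S}(\gamma,1))=Q(1)=1$ and $G(0)=Q(0)$.

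The one nontrivial ingredient --- the heart of the proof --- is the commutation identity between a Sibuya map and a thinning (Bernoulli) map,
\begin{equation*}
\mathcal{S}\bigl(\gamma,\mathcal{B}_{b}(w)\bigr)=\mathcal{B}_{b^{\gamma}}\bigl(\mathcal{S}(\gamma,w)\bigr),\qquad b>0,\ \gamma\in(0,1),
\end{equation*}
which follows immediately from $1-\mathcal{B}_{b}(w)=b(1-w)$ and $1-\mathcal{S}(\gamma,w)=(1-w)^{\gamma}$, since both sides equal $1-b^{\gamma}(1-w)^{\gamma}$. Informally: thinning by $b$ and then Sibuya-compounding with parameter $\gamma$ is the same as first compounding and then thinning by $b^{\gamma}$.

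With this identity in hand I would write, for $b\in(0,1)$,
\begin{equation*}
G(\mathcal{B}_{b}(w))=Q\bigl(\mathcal{S}(\gamma,\mathcal{B}_{b}(w))\bigr)=Q\bigl(\mathcal{B}_{b^{\gamma}}(\mathcal{S}(\gamma,w))\bigr),
\end{equation*}
hence
\begin{equation*}
\frac{G(w)}{G(\mathcal{B}_{b}(w))}=\left.\frac{Q(v)}{Q(\mathcal{B}_{b^{\gamma}}(v))}\right|_{v=\mathcal{S}(\gamma,w)}=H_{b^{\gamma}}\bigl(\mathcal{S}(\gamma,w)\bigr),
\end{equation*}
where $H_{a}(v):=Q(v)/Q(\mathcal{B}_{a}(v))$ is the pgf furnished by the self-decomposability of $Q$ via Eq.~(\ref{eq:seldpgf}). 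Since $b\in(0,1)$ and $\gamma\in(0,1)$ force $b^{\gamma}\in(0,1)$ --- and $b\mapsto b^{\gamma}$ is a bijection of $(0,1)$ onto itself, so the full range of self-decomposability of $Q$ is exactly what is consumed --- $H_{b^{\gamma}}$ is a genuine pgf, and therefore $H_{b}^{G}:=H_{b^{\gamma}}\circ\mathcal{S}(\gamma)$, a composition of two pgfs, is a pgf. This gives the required decomposition of $G$ and proves the theorem.

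I do not anticipate a real obstacle once the commutation identity is noticed; the rest is routine. The two points that deserve a careful sentence are (i) that $H_{a}$ is truly a pgf and not merely a formal quotient of power series --- this is precisely the assertion, quoted just after Eq.~(\ref{eq:seldpgf}), that in a self-decomposable representation $H_{a}$ is absolutely monotone with $H_{a}(1)=1$ for every $a\in(0,1)$ --- and (ii) the stability of the pgf class under composition. If one prefers a probabilistic narrative, the commutation identity reads $b\odot\bigl(\text{$\mathcal{S}(\gamma)$-compound of }X\bigr)\preq\text{$\mathcal{S}(\gamma)$-compound of }\bigl(b^{\gamma}\odot X\bigr)$, and feeding the decomposition $X=b^{\gamma}\odot X+Y_{b^{\gamma}}$ through the Sibuya compounding produces the same $H_{b}^{G}$.
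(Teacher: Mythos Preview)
Your proof is correct and follows essentially the same route as the paper's: both hinge on the commutation identity $\mathcal{S}(\gamma,\mathcal{B}_{a}(w))=\mathcal{B}_{a^{\gamma}}(\mathcal{S}(\gamma,w))$, use it to rewrite the ratio $G(w)/G(\mathcal{B}_{a}(w))$ as the self-decomposability factor of $Q$ at parameter $a^{\gamma}$ composed with $\mathcal{S}(\gamma,\cdot)$, and conclude by noting that a composition of pgfs is a pgf. Your write-up is simply more explicit about the identity and about why $a^{\gamma}\in(0,1)$ lands in the admissible range for $Q$'s self-decomposability.
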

\begin{proof}
We need to prove that $\forall a$ the function 
\begin{equation}
\label{ratG}
\frac{G(w)}{G(\mathcal{B}_{a}(w))}
= \frac{Q(\mathcal{S}(\gamma,w))}{Q(1\!-\!a^{\gamma}(1\!-\!\mathcal{S}(\gamma,w)))}\!=\! 
\frac{Q(\mathcal{S}(\gamma,w))}{Q(\mathcal{B}_{a^{\gamma}}(\mathcal{S}(\gamma,w)))}
\!=\!Q_{a^{\gamma}}(\mathcal{S}(\gamma,w))
\end{equation}
is also the pgf. This is true since $Q_{\alpha^{\gamma}}(\mathcal{S}(w))$ is a compound pgf $\forall a \in(0,1)$.  
\end{proof}

\begin{Remark}
Obviously, sum of $n$ independent self-decomposable rvs $X_1+\ldots X_n$ is also self-decomposable. The non-trivial character of the above theorem consists in the statement that this remains true also for the sum iid self-decompolsable rvs when $n$ fluctuates according to the Sibuya distribution.
\end{Remark}

\begin{Remark}
Note that the thinned version of the Sibuya pgf appearing in Eq.~\ref{ratG}
\begin{equation}
\label{scsbdpgf}
1 - a^{\gamma}(1-\mathcal{S}(\gamma, w))=1-\lambda(1-w)^{\gamma}:=\mathcal{S}(\lambda,\gamma,w);\;\;\; \lambda>0
\end{equation}
is the pgf of the {\it scaled Sibuya distribution} \cite{Christoph2000}. The latter is known to be infinitely divisible if and only if $\lambda\leq 1-\gamma $ and self-decomposable if and only if $\lambda\leq (1-\gamma)/(1+\gamma)$ \cite{Christoph2000}. In this case, as follows from the Theorem \ref{sfdsbd}, the pgf $\mathcal{S}(\lambda,\gamma)\circ \mathcal{S}(\alpha)$ is also self-decomposable.
\end{Remark}

\begin{Corollary}
Every Sibuya-distributed random variable $X_1$ with parameter $\gamma_1$ can be decomposed into the sum of two rvs $X_1=X_2+Y$, where $X_2$ has the Sibuya distribution with parameter $\gamma_2<\gamma_1$ and $Y$ is self-decomposable.
\end{Corollary}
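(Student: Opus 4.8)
The plan is to realize the decomposition at the level of probability generating functions. Writing $Y$ for the prospective self-decomposable summand and using independence of $X_2$ and $Y$, the relation $X_1 = X_2 + Y$ becomes the factorization $\mathcal{S}(\gamma_1,w) = \mathcal{S}(\gamma_2,w)\,H(w)$, so the whole statement reduces to proving that the quotient
\[
H(w) = \frac{\mathcal{S}(\gamma_1,w)}{\mathcal{S}(\gamma_2,w)} = \frac{1-(1-w)^{\gamma_1}}{1-(1-w)^{\gamma_2}}
\]
is the pgf of a self-decomposable random variable $Y$. Two things must therefore be checked: that $H$ is a genuine pgf (non-negative coefficients, $H(1)=1$), and that the law it encodes is self-decomposable.

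The first step I would take is to recognize $H$ as a compound built from the shifted Sibuya pgf $\mathcal{S}_0$ of Example \ref{ex:shsbd}. Setting $s=\mathcal{S}(\gamma_2,w)$ we have $1-s=(1-w)^{\gamma_2}$, so $(1-s)^{\delta}=(1-w)^{\gamma_2\delta}$; choosing $\delta=\gamma_1/\gamma_2$ and invoking the semigroup relation $\mathcal{S}(\gamma_1)\circ\mathcal{S}(\gamma_2)=\mathcal{S}(\gamma_1\gamma_2)$ yields the clean identity
\[
H(w) = \frac{1-(1-\mathcal{S}(\gamma_2,w))^{\delta}}{\mathcal{S}(\gamma_2,w)} = \mathcal{S}_0(\delta)\circ\mathcal{S}(\gamma_2).
\]
This recasts the remainder factor as the compounding of a shifted-Sibuya law with the Sibuya law $\mathcal{S}(\gamma_2)$, which is exactly the form to which the earlier machinery applies.

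The self-decomposability would then follow from Theorem \ref{sfdsbd}. The shifted Sibuya $\mathcal{S}_0(\delta)$ is self-decomposable (Example \ref{ex:shsbd}), and Theorem \ref{sfdsbd} guarantees that compounding any self-decomposable pgf with a Sibuya pgf again produces a self-decomposable pgf; hence $H=\mathcal{S}_0(\delta)\circ\mathcal{S}(\gamma_2)$ is self-decomposable. Substituting back into $\mathcal{S}(\gamma_1)=\mathcal{S}(\gamma_2)\,H$ would then deliver the desired representation $X_1=X_2+Y$ with $X_2$ distributed as Sibuya($\gamma_2$) and $Y$ self-decomposable. An attractive feature of this route is that the compound representation makes $H$ a pgf for free, provided the shifted-Sibuya factor $\mathcal{S}_0(\delta)$ is itself a legitimate self-decomposable pgf, so that no independent positivity computation on the Taylor coefficients of $H$ is required.

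The step I expect to be the main obstacle is precisely this last proviso: verifying that the shifted-Sibuya factor $\mathcal{S}_0(\delta)$ with $\delta=\gamma_1/\gamma_2$ is a bona fide self-decomposable pgf. This is the delicate positivity point, since one needs $\mathcal{S}_0(\delta,0)=\delta$ not to exceed $1$ and the self-decomposability range established for the shifted Sibuya in Example \ref{ex:shsbd} to cover the value $\delta$ in play. It is exactly this admissibility check on $\delta$, and hence on the relative sizes of $\gamma_1$ and $\gamma_2$, that governs whether the compound identity above produces a genuine law; the remaining ingredients—the algebraic semigroup manipulation and the invocation of Theorem \ref{sfdsbd}—are routine once this verification is in hand.
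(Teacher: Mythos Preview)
Your approach is exactly the paper's: write $\gamma_1=\alpha\gamma_2$, factor the quotient as $\mathcal{S}_0(\alpha)\circ\mathcal{S}(\gamma_2)$, and invoke Theorem~\ref{sfdsbd} together with the self-decomposability of the shifted Sibuya law. The ``main obstacle'' you flag is not a gap in the argument but a misprint in the statement: the inequality should read $\gamma_2>\gamma_1$ (equivalently $\alpha=\gamma_1/\gamma_2\in(0,1)$), as the paper's own proof assumes and as your computation $\mathcal{S}_0(\delta,0)=\delta\le 1$ forces; with the inequality corrected, your admissibility check is immediate and the proof is complete.
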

\begin{proof} Let  $Q(w)$ be the pgf of the rv $Y$ and $\gamma_1=\alpha\gamma_2, 0<\alpha<1$. Then 
\begin{equation}
  \mathcal{S}(\alpha \gamma_2,w) =
 \mathcal{S}(\alpha)\circ\mathcal{S}(\gamma_2)= \mathcal{S}(\gamma_2,w)Q(w). 
\end{equation}
and hence 
\begin{equation}
 Q(w)=\frac{\mathcal{S}(\alpha)\circ\mathcal{S}(\gamma_2)}{\mathcal{S}(\gamma_2,w)}=
 \left(\frac{\mathcal{S}(\alpha,w)}{w}\right)\circ \mathcal{S}(\gamma_2)=
  \mathcal{S}_0(\alpha)\circ \mathcal{S}(\gamma_2)
  \,,   
\end{equation}
where $\mathcal{S}_0(\alpha,w)$ is the pgf of the shifted Sibuya distribution (\ref{eq:sh_sbdpgf}). 
The latter is self-decomposable \cite{Christoph2000} and so we can apply the Theorem \ref{sfdsbd} to obtain the needed result.
\end{proof}

\begin{Example}
\label{ex:mtlef}
Skipping the trivial case of the discrete stable distribution (\ref{eq:dsdpgf}) which is self-decomposible  by construction let us consider the negative binomial distribution with parameter $k=1$, so called  geometric distribution, with the pgf $Q(w)=(1+\lambda(1-w))^{-1}$, cf. Eq.~(\ref{eq:nbdpgf}). Then
\begin{equation}
\label{MLpgf}
 G(w)= Q\circ \mathcal{S}(\gamma)= \frac{1}{1+\lambda(1-w)^{\gamma}}
\end{equation}
is the pgf of Mittag-Leffler distribution. Since $Q(w)$ is self-decomposable it is also $G(w)$. The random variable $Y\preq X+1$ with the pgf $wG(w)$ where $X$ has the pgf given by Eq.~(\ref{MLpgf}) with $\gamma=\frac{1}2{}$ and $\lambda=1$ describes the number of trials until the first return to the origin in one-dimensional symmetric random walk on  the  lattice \cite{Feller1}.
\end{Example}


\section{Birth-and-death process}
Let us consider continuous-time birth-and-death (B-D) process \cite{Feller1,Anderson2012-cf}  consisting of random events of the same kind -- e.g. creations and disintegrations of particles, populations of interacting species with indistinguishable members, queueing networks (interconnected queues) {\it etc.}. 
The system represents a continuous-time Markov chain with a discrete state space $n\in \mathbb N$ which changes only through transitions from states to their nearest neighbors $n\to n\pm1$. It is described by stochastic process $\{X(t): t \geq 0\}$
\begin{equation}
\mathbb P(X(s+t)=j)|X(s)=i)= \mathbb P(X(t)=j)|X(0)=i)=P_{i,j}(t); \;\; \forall i,j, \;t>0,s>0
\end{equation}
with transition probabilities $P_{i,j}(t)$ satisfying Chapman-Kolmogorov equation
\begin{equation}
    P_{i,j}(s+t)=\sum_{k}P_{i,k}(s)P_{k,j}(t).
\end{equation}
Evolution of the probability distribution $p_j(t)\equiv\mathbb P (X(t)=j)=\sum_i \mathbb P(X(0=i)P_{i,j}(t)$ to find the system in state $j$ at time $t$ if at time $t=0$ it was in state $i$ 
is controlled by the time-independent transition probabilities per unit time 
\begin{equation}
    \lambda_j=\frac{dP_{j,j+1}(t)}{dt}\Big |_{t=0^{+}}\,, \;\;\;\;
    \mu_j=\frac{dP_{j,j-1}(t)}{dt}\Big |_{t=0^{+}}.
\end{equation}
and satisfies  differential-difference equation
\begin{eqnarray}
\label{dd}
\frac{dp_{j}(t)}{dt}=-(\lambda_j +\mu_j) p_{j}(t) + \lambda_{j-1} p_{j-1}(t) +\mu_{j+1}p_{j+1}(t), \;\; j > i\,,
\\ \nonumber 
 \frac{dp_i(t)}{dt}=-\lambda_i  p_i(t) + \mu_{i+1} p_{i+1}(t), \;\; i \in \mathbb N  \,.
\end{eqnarray}
The last equation in (\ref{dd}) reflects the initial condition  $p_j(0)=\delta_{ij}$. 

One of the commonly used models specifying functional dependence of the transitional probabilities $\lambda_j$ and $\mu_j$ on $j$ considers $j\to j\pm 1$ transition as resulting from several underlying microscopic (particle-level) processes $k \to k\pm 1$,   $k=0,\ldots,j$ with the elementary transition probabilities $\alpha_k$ and $\beta_k$, respectively. Thus one particle is born or dies with the elementary probability per unit of time $\alpha_0$ or $\beta_0 j$, one particle can split into two particles with the rate $\alpha_1 j$, two particles can fuse into a single particle with the rate $\beta_1 j(j-1)$, {\it etc.}
The relations between  the transitional probabilities $\lambda_j$ and $\mu_j$ and elementary ones are given by the following formulas
\begin{equation}
\label{albe}
\lambda_j  =\sum_{k=0}^{\ell} \alpha_k (j)_k, \; \;\;\;
\mu_j  =  j \sum_{k=0}^{m} \beta_k (j-1)_k; \; \;\;\; \ell \le j ;\; m \le j ;\;\;\;\;\; (j)_k \!\equiv\! \prod_{i=0}^{k-1} (j\!-\!i)
\end{equation}
For application of this approach to multiparticle production in high energy physics see e.g. \cite{Biyajima:1986yy, Zborovsky:1996yj}.

\subsection{Stationary B-D equations} It is well known that for large $t$ solution of the B-D Eq.~ (\ref{dd}) can be well approximated by  the limiting probabilities $p_j=\lim_{t\to\infty}p_j(t)$ \cite{Feller1,Anderson2012-cf} which solve the Eq.~ (\ref{dd}) with $dp_j(t)/dt=0$.  The latter is in this case transformed into two functional equations for one unknown function $F(j)$ 
\begin{eqnarray}
F(j\!+\!1) \equiv \mu_{j+1} p_{j+1} - \lambda_j p_{j} = \mu_jp_{j}-\lambda_{j-1}p_{j-1} \equiv F(j), ~~j > i\\ \nonumber  
F(i+1)\! = \! \mu_{i+1}p_{i+1}\! -\! \lambda_i p_{i} \!=0, \;\;i \in \mathbb N .
\label{Fn}
\end{eqnarray}
 Solution of the first equation $F(j\!+\!1)\!=\!F(j)$  is fixed to zero by the second equation $F(i+1)=0$. Consequently
\begin{equation}
\frac{p_{j+1}}{p_j}  = \frac{\lambda_j}{\mu_{j+1}} = \frac{\sum_{k=0}^{\ell} \alpha_k (j)_k}{(j+1)\sum_{k=0}^{m} \beta_k (j)_{k}}.
\end{equation}
We have thus proven the following theorem.
\begin{Theorem}
The unique solution of Eqs. (\ref{Fn})  with  $\lambda_j $  and $\mu_j$  given by Eq.~(\ref{albe}) reads:
\begin{equation}
p_j\!=\!p_i\prod_{n=i}^{j-1}\frac{g(n)}{n+1} ; \; \;\;\;\; 
g(n) = \frac{(n\!+\!1)p_{n+1}}{p_n} = \frac{\sum_{k=0}^{\ell} \alpha_k (n)_k}{\sum_{k=0}^{m} \beta_k (n)_{k}}\,, \; \ell \leq j\,, \; m \leq j
\label{eq:prodgn}
\end{equation}
where $p_i$ is the first non-zero value of the positive sequence defining the pmf $p_j$, i.e. $\min_{i\geq0}p_i>0$ and hence satisfying $g(i)>0$ and $g(i-1)\leq 0$.
\end{Theorem}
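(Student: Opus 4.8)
The plan is to collapse the two functional relations (\ref{Fn}) into a single first-order recurrence for the ratios $p_{j+1}/p_j$ and then to solve that recurrence by telescoping. The first step is to notice that the relation $F(j+1)=F(j)$ for $j>i$ says exactly that the sequence $\bigl(F(k)\bigr)_{k\ge i+1}$ is constant, while the boundary relation $F(i+1)=0$ pins that constant to zero; hence $F(k)=0$ for every $k\ge i+1$, i.e. $\mu_{j+1}p_{j+1}=\lambda_j p_j$ for all $j\ge i$. Inserting the expressions (\ref{albe}), so that $\mu_{j+1}=(j+1)\sum_{k=0}^{m}\beta_k(j)_k$ and $\lambda_j=\sum_{k=0}^{\ell}\alpha_k(j)_k$, and dividing by $\mu_{j+1}$ --- which is strictly positive on the support of the pmf, as explained below --- yields $p_{j+1}/p_j=\lambda_j/\mu_{j+1}=g(j)/(j+1)$ for $j\ge i$, with $g$ as in (\ref{eq:prodgn}).

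The second step is the telescoping itself: for $j>i$ one writes $p_j=p_i\prod_{n=i}^{j-1}\bigl(p_{n+1}/p_n\bigr)$ and substitutes the ratio just found, obtaining the asserted product formula $p_j=p_i\prod_{n=i}^{j-1}g(n)/(n+1)$ (with the empty product giving $p_i$ itself). Uniqueness is then immediate: any nonnegative solution of (\ref{Fn}) is completely determined by the single number $p_i$, and $p_i$ is fixed by the normalization $\sum_{j\ge i}p_j=1$ whenever this series converges; so the stationary pmf, if it exists at all, is unique and has the stated form.

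The third step justifies the characterization of the base index $i$. The key point is that $p_j\ge0$ together with $p_{j+1}=p_j\,g(j)/(j+1)$ forces $g(n)\ge0$ at every $n$ with $p_n>0$, and $g(n)=0$ would collapse the support to the single point $\{n\}$; hence the support is an interval $\{i,i+1,\dots\}$ on which $g$ is strictly positive, which is what makes the division by $\mu_{j+1}$ above legitimate. That $i$ is the \emph{least} such index is equivalent to $g(i)>0$ together with $g(i-1)\le0$, the latter being precisely the condition under which the boundary equation $F(i+1)=0$ --- rather than the generic interior relation --- is the operative one, because the rate structure (\ref{albe}) cannot sustain positive probability below $i$.

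I expect the genuine obstacle to be not the algebra, which is the routine telescoping above, but the clean formulation of the hypotheses on the elementary rates $\alpha_k,\beta_k$ that make the construction meaningful: one must rule out the degenerate situations in which $g$ changes sign more than once or never becomes positive (so that a well-defined $i$ exists), and one must assume convergence of $\sum_{j\ge i}\prod_{n=i}^{j-1}g(n)/(n+1)$ so that $p_i$ can be normalized to give an honest probability distribution.
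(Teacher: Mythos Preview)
Your argument is correct and follows exactly the route the paper takes: the paper simply observes that $F(j+1)=F(j)$ together with $F(i+1)=0$ forces $\mu_{j+1}p_{j+1}=\lambda_j p_j$, writes down the resulting ratio $p_{j+1}/p_j=\lambda_j/\mu_{j+1}$ in terms of (\ref{albe}), and declares the theorem proven. Your proposal expands this into an explicit telescoping and adds the discussion of normalization, the support structure, and the role of the index $i$ that the paper leaves implicit, but the underlying idea is identical.
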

\begin{Remark}
Class of discrete distributions on $\mathbb{Z}_{+}$ satisfying the recursion formula (\ref{eq:prodgn}) is substantially wider then usually discussed Kotz or Ord  family of distributions or their generalizations \cite{JKK2005,Chakraborty2015-ul}.
It is also worth nothing the similarity between Eq.~(\ref{eq:prodgn}) and pgfs satisfying Eq.~(\ref{eq:k1}). However, not every pgf satisfying Eq.~(\ref{eq:k1}) has its $g(n)$-function
so simple that it can be written as in Eq.~(\ref{eq:prodgn}). 

To illustrate this point let us give two examples which are related to Sibuya-like distributions. The first is the thinned version of the shifted Sibuya distribution (\ref{eq:sh_sbdpgf}) with the pgf $\mathcal{S}_0(\mathcal{B}_a(w))$. It is self-decomposable and so it can be expressed as the Laplace integral (\ref{eq:lapdens}). Using Eq.~ (\ref{eq:gnkl}) we can calculate  its $g$--function to obtain
\begin{equation}
\label{g_thshsbd}
g(n)=\frac{(n+1) (n+1-\gamma)} {(n+2)}
\cdot \frac{\, _2F_1(1,\gamma +1;n+3;1-a)}{\, _2F_1(1,\gamma +1;n+2;1-a)}    
\end{equation}
where $_2F_1(a_1,a_2;b_1;z)$ is the Gauss hypergeometrical function. Obviously only for $a=1$ the function $g(n)$ can be expressed as a ratio of two polynomials in $n$.

Another example is the pgf of the discrete stable distribution (\ref{eq:dsdpgf}). Although its function $f(x)$ does exist, see Eq.~\ref{eq:fdsd}, its $g(n)$ 
\begin{equation}
  g(n)=\frac{\sum_{i=0}^{n+1} (\lambda\gamma)^{i+1} F_i(\gamma)}{\sum_{i=0}^n (\lambda\gamma)^{i+1} F_i(\gamma)}\,,
\end{equation}
where the functions $F_i(\gamma)$ are some polynomials in $\gamma$
satisfying $F_i(1)=0, i>0$  with $F_0=1$, it does not allow to be written in the form of Eq.~(\ref{eq:prodgn}).
\end{Remark}

\begin{Corollary}
\label{hpg}
The pmf (\ref{eq:prodgn}) is the generalized hypergeometric probability distributions \cite{JKK2005}  with the pgf:
\begin{equation}
Q(w)\!=\! \frac{H(zw)}{H(z)}\,, 
\;H(z)= \sum_{n=0}^{\infty} \frac{z^n}{n!}  \prod_{i=j}^{n-1}g(i)= \,_{\ell}F_m(a,b,z)
\label{eq:gn2pgf}
\end{equation}
where $_{\ell}F_m(a,b,z)$ represents the generalized hypergeometric function with series expansion 
\begin{equation}
\label{eq:genhyp}
_{\ell}F_m(a_1, \ldots,a_{\ell}; b_1, \ldots, b_m;z)=
\sum_{n=0}^{\infty}\frac{(a_1)_n\ldots(a_{\ell})_n}
{(b_1)_n\ldots(b_m)_n} \frac{z^n}{n!}\,.   
\end{equation}
The constants $a_1,\ldots,a_{\ell}$ and $b_1,\ldots,b_m$ in
Eq.~(\ref{eq:genhyp}) are some  rational functions of $\alpha_1,\ldots,\alpha_{\ell}$ and $\beta_1,\ldots,\beta_m$ and $z=\alpha_{\ell}/\beta_m$ where $\alpha_{\ell}$ and $\beta_m$ are the last non-zero transition amplitudes. 
\end{Corollary}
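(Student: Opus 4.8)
The plan is to read off from the recursion $p_{n+1}=g(n)\,p_n/(n+1)$ --- with $g$ as in (\ref{eq:prodgn}) --- that $(p_n)_{n\ge 0}$ is, up to an overall constant, the sequence of Taylor coefficients of a generalized hypergeometric series, and then to fix that constant by $\sum_n p_n=1$. The only structural fact about ${}_\ell F_m$ that I will use is the one that characterises it: a power series $\sum_n c_n z^n$ is a constant times ${}_\ell F_m(a_1,\dots,a_\ell;b_1,\dots,b_m;z)$ exactly when the ratio $c_{n+1}/c_n$ is a rational function of $n$ whose numerator splits into $\ell$ linear factors $(n+a_r)$ and whose denominator splits into $m$ linear factors $(n+b_s)$ together with one extra factor $(n+1)$ coming from the $n!$ in (\ref{eq:genhyp}).

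To apply this, first factor the two polynomials in $g$. Since $\sum_{k=0}^{\ell}\alpha_k(n)_k$ is a degree-$\ell$ polynomial in $n$ with leading coefficient $\alpha_\ell$, and $\sum_{k=0}^{m}\beta_k(n)_k$ is degree $m$ with leading coefficient $\beta_m$, one may write $\sum_{k=0}^{\ell}\alpha_k(n)_k=\alpha_\ell\prod_{r=1}^{\ell}(n+a_r)$ and $\sum_{k=0}^{m}\beta_k(n)_k=\beta_m\prod_{s=1}^{m}(n+b_s)$, where $-a_r$ and $-b_s$ are the zeros of these polynomials (so their elementary symmetric functions are rational in the $\alpha_k$ and $\beta_k$, respectively). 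Setting $z:=\alpha_\ell/\beta_m$, the recursion reads
\[
\frac{p_{n+1}}{p_n}=\frac{g(n)}{n+1}=z\,\frac{\prod_{r=1}^{\ell}(n+a_r)}{(n+1)\prod_{s=1}^{m}(n+b_s)},
\]
which is precisely $c_{n+1}/c_n$ for $c_n:=\dfrac{(a_1)_n\cdots(a_\ell)_n}{(b_1)_n\cdots(b_m)_n}\dfrac{z^n}{n!}$. Hence, on the support of the distribution, $p_n$ and $c_n$ obey the same first-order recursion, so $p_n\propto c_n$; assuming the support begins at $n=0$ (the generic case), this gives $p_n=p_0\,c_n$ for all $n\ge 0$. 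Writing $H(z):=\sum_{n\ge 0}c_n={}_\ell F_m(a,b;z)$ and using $\sum_n p_n=1$ forces $p_0=1/H(z)$, so $p_n=c_n/H(z)$ and therefore
\[
Q(w)=\sum_{n\ge 0}p_n w^n=\frac{1}{H(z)}\sum_{n\ge 0}\frac{(a_1)_n\cdots(a_\ell)_n}{(b_1)_n\cdots(b_m)_n}\frac{(zw)^n}{n!}=\frac{H(zw)}{H(z)},
\]
which is (\ref{eq:gn2pgf}); the product representation of $H$ there follows from $\prod_{k=0}^{n-1}g(k)=z^{n}(a_1)_n\cdots(a_\ell)_n/\big((b_1)_n\cdots(b_m)_n\big)$, i.e.\ $c_n=\frac{1}{n!}\prod_{k=0}^{n-1}g(k)$.

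The manipulation is mostly bookkeeping, and two small points deserve a remark. If the first index with $p_{n_0}>0$ is some $n_0>0$, one extracts the factor $w^{n_0}$ from $Q(w)$ and reindexes $n\mapsto n+n_0$; the shifted ratio acquires a compensating $(n+1)$ in numerator and denominator, so the law becomes a ${}_{\ell+1}F_{m+1}$ in $w$ with parameters shifted by $n_0$. Also, the factorisation into linear factors is a priori over $\mathbb{C}$, but this causes no trouble: the $c_n=p_n/p_0$ are real and non-negative by the very construction of the stationary solution, so the resulting ${}_\ell F_m$ automatically has the right coefficients. The one genuine caveat --- and the step where care is really needed --- is normalisability: (\ref{eq:prodgn}) defines an honest probability law only when $H(z)=\sum_n c_n<\infty$, i.e.\ when $\ell\le m$, or $\ell=m+1$ with $|z|<1$, or the numerator parameters make the series terminate; the Corollary is to be read under the standing assumption that such a stationary distribution exists, and under that assumption the steps above are complete.
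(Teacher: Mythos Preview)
Your argument is correct and follows essentially the same route as the paper's: both recognize that the one-step recursion $p_{n+1}/p_n=g(n)/(n+1)$, with $g$ a ratio of polynomials of degrees $\ell$ and $m$, is exactly the recursion satisfied by the Taylor coefficients of ${}_\ell F_m$, and then fix the overall constant via $Q(1)=1$ to obtain $Q(w)=H(zw)/H(z)$ with $z=\alpha_\ell/\beta_m$. Your explicit linear factorization of the numerator and denominator of $g$ makes the identification of the hypergeometric parameters $a_r,b_s$ more transparent than the paper's terse computation, and your side remarks on the shifted-support case and on convergence of $H(z)$ are somewhat more careful than the paper's own treatment, but the core idea is identical.
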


\begin{proof}
Using Eq.~ (\ref{eq:prodgn})  together with the identity $n!=j!\prod_{i=j}^{n-1} (i+1)$  and $z=\alpha_{\ell}/\beta_{m}$ we obtain
\begin{equation}
\label{eq:ghf}
Q(w)\!=\! \!=\! p_j\sum_{n=0}^{\infty} w^n\prod_{i=j}^{n-1}\frac{g(i)}{i\!+\!1}\!=\!
p_j z^{-j}j! \sum_{n=0}^{\infty}\frac{(zw)^n}{n!} \prod_{i=j}^{n-1}
\frac{\sum_{k=0}^{\ell} \tfrac{\alpha_k}{\alpha_{\ell}} (i)_k}{\sum_{k=0}^{m} \tfrac{\beta_k}{\beta_m} (i)_{k}}=\frac{H(zw)}{H(z)}\,.
\end{equation}
The last equality with $1/H(z)= p_j z^{-j} j!$ follows from the condition $Q(1)=1$. Thus the pgf (\ref{eq:ghf}) corresponds to the power series distribution with
the function $H(z)$ which which is of the type of Eq.~(\ref{eq:genhyp}).
\end{proof}

\begin{Remark}
Note the certain arbitrariness when associating the coefficients $a_i$ and $b_i$ with $\alpha_i$ and $\beta_i$, respectively. This is due to the permutation symmetry of the generalized hypergeometric function (\ref{eq:genhyp}) w.r.t. its arguments $a_i$ and $b_i$ separately.
\end{Remark}

\begin{Example}
\label{ex:alpha_1}
Consider Eq.~ (\ref{eq:prodgn}) with $\ell=1,j=0$, i.e. the case with the all coefficients $\alpha_{0},\alpha_{1}, \beta_{0},\beta_{1}$ positive. Then
\begin{eqnarray}
\label{g01}
g(n)= \frac{\alpha_0\!+\!\alpha_1 n}{\beta_0\!+\!\beta_1 n}; \;\;\;
H\left(\frac{\alpha_1}{\beta_1}\right) = \sum_{n=0}^{\infty}\frac{(\tfrac{\alpha_1}{\beta_1})^n}{n!}\prod_{j=0}^{n-1}\frac{\frac{\alpha_0}{\alpha_1}\!+\! i}{\frac{\beta_0}{\beta_1}\!+\! i}\!=\! _1F_1\left(\frac{\alpha_0}{\alpha_1};\frac{\beta_0}{\beta_1}; \frac{\alpha_1}{\beta_1}\right)
\end{eqnarray}
where $_1F_1(a;b;z)$ is Kummer's (confluent hypergeometric) function.  

Neglecting in $g(n)$ of Eq.~(\ref{g01})  $\alpha_1$ and setting $\beta_0=\beta_1$ with $z=\alpha_0/\beta_1=\theta$ yields the pmf of  the Conway-Maxwell-Poisson (CMP) distribution with parameter $r=2$  \cite{Shm2005} 
\begin{equation}
\label{eq:cmp}
g(n)=  \frac{\theta}{n+1} ,~~ p_n =p_0\prod_{i=0}^{n-1}\frac{g(i)}{i+1}=p_0 \frac{\theta^n}{(n!)^2}\, ,~~
H(z)=\frac{1}{p_0}=\sum_{n=0}^{\infty}\frac{z^n}{(n!)^2}\,.
\end{equation}
Expression for the scaled factorial moments (\ref{eq:Fj}) of this pmf obtained in \cite{Daly2016-ec} reads
\begin{equation}
F_j=\frac{I_j(2\sqrt{\theta})}{(I_1(2\sqrt{\theta}))^j};\;\;
I_j(x)=\sum_{k=0}^{\infty}\frac{1}{k!\Gamma(j+k+1)}
\left(\frac{x}{2}\right)^{j+2k}
\end{equation}
It is worth mentioning that the pmf (\ref{eq:cmp}) has appeared in \cite{Ko:2000vp} as a stationary solution of the kinetic master equation describing the production of charged particles which are created or destroyed only in pairs due to the conservation of their charge.

On the other hand neglecting in $g(n)$ of Eq.~ (\ref{g01}) the constant $\beta_{1}$ we obtain the NBD
\begin{eqnarray}
\label{eq:nbdqk}
g(n)= \frac{\alpha_0+\alpha_1 n}{\beta_0} = q (k+n);\;\;\; p_n 
=p_0\frac{q^n (k)_n}{n!};\;\;\; q=\frac{\alpha_1}{\beta_0}\,, k=\frac{\alpha_0}{\alpha_1} \\ 
\sum_{n=0}^{\infty}p_n\!=\!1\!=\! p_0\left (1\!+\!\sum_{n=1}^{\infty}\frac{q^n (k)_n}{n!} \right)\!=\! (1-q)^{-k}; \;\;\;p_n=\binom{n+k-1}{k}(1-q)^k q^n. 
\label{eq:nbdpmf}
\end{eqnarray}
\end{Example}

\begin{Example}
\label{ex:alpha_2}
Consider now more general case of  Eq.~ (\ref{eq:prodgn}) with  $\ell=2,j=0$
\begin{equation}
\label{eq:g02}
g(n)= \dfrac{\alpha_0 + \alpha_1 n + \alpha_2 n(n - 1)}{\beta_0 +\beta_1 n};
\end{equation} 
With $z=\alpha_2/\beta_1$, $\alpha_0= \beta_1=\beta_0/2$ and $\alpha_1=3\alpha_2=\beta_0\theta/2$ we obtain the zero-inflated logarithmic distribution
\begin{equation}
\label{eq:zild}
g(n)= \frac{(n\!+\!1)^2}{n\!+\!2}\theta;\; \; \;  p_n =p_0\prod_{i=0}^{n-1}\frac{g(i)}{i+1}=-\ln(1\!-\!\theta)\dfrac{\theta^{n+1}}{n\!+\!1}.
\end{equation} 
On the other hand the logarithmic distribution can be reconstructed from    
\begin{equation}
g(n) = n\theta; \;\;\; p_n=p_1\prod_{i=1}^{n-1}\frac{g(i)}{i+1}; \;\;\; \frac{\alpha_1}{\beta_1} = \frac{\alpha_2}{\beta_1} = \theta, \alpha_0=\beta_0=0.
\label{Logg}
\end{equation}
And similarly to the above case for the geometric distribution with $g(n)=(n+1)\theta$ the only non-zero coefficients are $\alpha_1 = 2\beta_1\theta, \alpha_2 = \beta_1\theta$. 
\end{Example}

\begin{Example}
\label{ex:beta_2}
While the NBD (\ref{eq:nbdpmf}) is frequently used in different areas of science e.g. in medicine \cite{Carter2014-rs}, biology \cite{Lloyd-Smith2005-rx} or in particle physics \cite{Biyajima:1986yy,Giovannini:1985mz,Chliapnikov:1989ji}, the pmf obtained from Eq.~ (\ref{eq:prodgn}) with non-zero $\alpha_{0},\alpha_2,  \beta_{0},\beta_{2}$  and thus with the pgf Eq.~(\ref{eq:gn2pgf})  where
\begin{equation}
 H(z)=\,_2F_2(a_1, a_2;b_1, b_2;z); \; a_1+a_2=b_1+b_2=-1,\, a_1a_2=\frac{\alpha_0}{\alpha_2},\, b_1b_2 = \frac{\beta_0}{\beta_2},\, z=\frac{\alpha_2}{\beta_2}
\end{equation}
is seldom. Nevertheless, as shown in  \cite{Zborovsky:1996yj} it is important when describing the data with a secondary maximum (bump). 
\end{Example}

\section{Sibuya--like solutions of the stationary birth-death equations}
\subsection{Generalized Sibuya distribution}
The Sibuya distribution with the probability mass function
\begin{equation}
\label{sbd}
\mathbb P(N=n)=
\frac{\gamma(\gamma-1)\dots(\gamma-n+1)}{n!}
(-1)^{n+1}= \binom{\gamma}{n}(-1)^{n+1},\, n\in \mathbb N, 
\end{equation}
first appeared in \cite{art:Sibuya}. 
The generalized Sibuya distribution with parameters $\nu\geq 0$ and $0< \gamma<\nu +1$ has been introduced in \cite{Kozu2017} as the distribution of the number $N$ of trials until series of the Bernoulli trials with the varying success probability $\gamma/(\nu+n-1)$ and thus having the probability mass function
\begin{equation}
\label{gsbd}
p_n =\left(1-\frac{\gamma}{\nu +1}\right)\ldots \left(1-\frac{\gamma}{\nu +n - 1}\right)\frac{\gamma}{\nu +n }.
\end{equation} 
One connection of the generalized Sibuya distribution with the classical Sibuya variable $N$ is through the probability of the excess $\mathbb P(N-m=n|N>m)$, which has the generalized Sibuya distribution with $\nu=m$. This extends to the generalized Sibuya variable $N$ for which $\mathbb P(N-m=n|N>m)$ has also the generalized Sibuya distribution with $\nu+m$ as its parameter.

 Writing  the pmf (\ref{gsbd}) as 
\begin{equation}
\label{gsbd1}
 p_n=p_1\prod_{i=1}^{n-1}\frac{g(i)}{i+1};\; \; \; p_1=\frac{\gamma}{\nu +1 };\; \; \;  
 g(i)= \frac{(i+1)(i+\nu-\gamma)}{i+\nu +1}.   
\end{equation}  
we can see that function $g(i)$ of the generalized Sibuya distribution is a special case of Eq.~(\ref{eq:g02}) with $\beta_2=0$ and thus solves  the stationary B-D equations (\ref{Fn}). The corresponding non-zero coefficients are 
\begin{equation}
\label{eq:abgensbd}
    \frac{\alpha_2}{\beta_1}=1,\;\; \frac{\alpha_1}{\beta_1}=2+\nu-\gamma,\;\; \frac{\alpha_0}{\beta_1}=\nu-\gamma,\;\; \frac{\beta_0}{\beta_1}=\nu+1\,.
\end{equation}
Moreover for $\nu=0$ we have $p_1=\gamma$ and Eq.~ (\ref{gsbd1})  reduces to the pmf of  Sibuya distribution (\ref{eq:sbdpgf}).

The pgf of the generalized Sibuya distribution calculated from Eq.~ (\ref{gsbd1}) reads 
\begin{equation}
\label{pgfgsbd}
Q(w)  =\frac{w \gamma}{\nu + 1}\cdot\,
_2F_1(1, \nu-\gamma + 1;  \nu+ 2; w)\,,
\end{equation}
where $_2F_1(a_1,a_2;b_1;z)$ is the Gauss hypergeometric function.

\subsection{Extended Sibuya distribution}
Consider now Eq.~(\ref{eq:prodgn}) with $\ell=2,j=1$ and hence with the function $g(n)$ given by Eq.~(\ref{eq:g02}) with $\alpha_0=\beta_0=0$
\begin{eqnarray}
\label{g12}
\; \; \; g(n)=\frac{\alpha_2 n(n\!-\!1)+\alpha_1 n}{\beta_1 n}=b(n-\gamma); \;\;\;\; \frac{\alpha_2}{\beta_1}=b,\; \frac{\alpha_1}{\beta_1}=b(1-\gamma),\; \gamma=1-\frac{\alpha_1}{\alpha_2}.
\end{eqnarray}
Using  Corollary \ref{hpg} it is easy to verify that the corresponding pgf 
\begin{equation}
\label{eq:prsbd}
 \mathcal{R}(b,\gamma,w) = \frac{H(b w)}{H(b)} = 
\frac{1 - (1- bw)^{\gamma}}{1 - (1 -b)^{\gamma}} = \frac{\mathcal{S}(\gamma, b w)}{\mathcal{S}(\gamma, b)}
\end{equation}
represents yet another generalization of the Sibuya pgf $\mathcal{S}(\gamma, w)$. For $\gamma = -k <0$ and fixed $b=q$ the pgf (\ref{eq:prsbd}) coincides with the NBD pgf  (\ref{eq:nbdpgf}) truncated at zero
\begin{equation}
\label{eq:nbdtrunc}
\mathcal{R}(w)   =  \frac{1 - (1- qw)^{-k}}{1 - (1-q)^{-k}} = 
\frac{Q_{NBD}(w) -Q_{NBD}(0)}{1-Q_{NBD}(0)};~~~Q_{NBD}(w)=\left(\frac{1-q}{1-qw}\right)^k\, .
\end{equation}
For $\gamma\to 0$, i.e. when $\alpha_2\to\alpha_1$, the pgf (\ref{eq:prsbd}) approaches the logarithmic distribution
\begin{equation}
\lim_{\gamma \to 0}\frac{1 -(1- bw)^{\gamma}}{1- (1 -b)^{\gamma}} = \frac{\log(1-b w)}{\log(1-b)}\, 
 \label{logpgf}   
\end{equation}
and for $\gamma \to 1$  it converges to distribution with the pmf $p_n=\delta_{1n}$. 

Let us note that the pgf (\ref{eq:prsbd}) was already mentioned in \cite{Letac2019-cm} (see Eq.(6) therein) as the natural exponential family extension of the Sibuya distribution. Since no name was given to it nor its relation with the other known distributions was discussed  we take a liberty and call it the pgf of {\it extended}  Sibuya distribution stressing its smooth transition in $\gamma$ to the pgfs of several fundamental discrete distributions. 

Let us now study the factorial moments $\E X^{(j)} = \langle n^{(j)} \rangle :=\langle n(n-1)\ldots (n-j+1) \rangle, ~j\geq1$, see Eq.(\ref{eq:Fj}), of the pgf $\mathcal{R}(w)$
\begin{equation}
 \langle n^{(j)} \rangle = \mathcal{R}^{(j)}(1)=\frac{(-b)^j (1-b)^{\gamma -j} \gamma ^{(j)}}{(1-b)^{\gamma }-1}\, ; ~~\gamma^{(j)}=\frac{\Gamma(\gamma\!+\!1)}{\Gamma(\gamma\!+\!1\!-\!j)}\, , ~ ~j\geq1.
\end{equation}
When approaching the Sibuya distribution, i.e. for $b\to 1^{-}$ we obtain expected results
\begin{equation}
  \langle n \rangle = \mathcal{R}^{(1)}(1) =
  \frac{p_1} {(1\!-\!b)^{1-\gamma}} \xrightarrow[b \to 1^{-}]{} \infty; \;\; p_1 = \mathcal{R}^{(1)}(0) = \frac{ b \gamma}{1-(1-b)^{\gamma}}~ \xrightarrow[b \to 1^{-}]{} ~\gamma. 
\end{equation}
For the second scaled factorial moment $F_2$, cf. Eq.~ (\ref{eq:Fj}) we have
\begin{equation}
F_2=
\frac{\langle n(n-1) \rangle}{\langle n \rangle ^2} =   
\frac{(1\! -\! \gamma) \left(1 \! - \!(1-b)^{\gamma }\right)}{\gamma (1-b)^{\gamma }}=\frac{1\! -\! \gamma}{\gamma} \cdot \left(e^{\delta\gamma}\!-\!1\right); \;\;\delta:=-\log(1\!-b).
\label{eq:F2R}
\end{equation}
Ignoring the region with $\gamma <0$ which corresponds to the zero-truncated NBD let us analyze the behavior of  
\begin{equation}
\label{F2d1}
 \frac{dF_2}{d\gamma} = \frac{e^{ \gamma \delta} (\gamma\delta (1\!-\!\gamma) \!-\!1)\!+\!1}{\gamma ^2}\,.
 \end{equation}
For $\gamma\to 0$, i.e. for  the logarithmic distribution, $F_2$ has a minimum at $\delta=2$. Further up, in the region  $(\gamma> 0, \delta>2)$, the extreme of $F_2(\gamma)$ moves along the trajectory $dF_2/d\gamma|_{\gamma_0}=0$ or equivalently $\psi(\gamma_0,\delta)=0$, where $\psi(\gamma,\delta)= (e^{\gamma\delta}(\gamma\delta(1-\gamma)-1)+1$. While $F_2(\gamma)$ increases for $0<\gamma<\gamma_0$  it decreases for $\gamma_0<\gamma<1$. 
When approaching the singularity at $b=1$  the second scaled factorial moment  $F_2$ and hence also its maximum goes to infinity.
Let us add that the same maxima appear also in the higher scaled factorial moments $F_j,j>2$.

\subsection{The shifted variants of extended and generalized Sibuya distributions}
Consider random variable $Y\preq X+1$ where $Y\in \mathbb{N}$ has either extended or generalized Sibuya distribution.
Then the rv $X \in \mathbb{Z}_{+}$ has {\it shifted extended or generalized Sibuya distribution}, respectively. The case when $Y$ has ordinary Sibuya distribution was already discussed in
Example \ref{ex:shsbd}. Note that contrary to the original extended or generalized Sibuya distributions their shifted variants are now defined on $\mathbb{Z}_+$
and so they may be self-decomposable.

\subsubsection{The shifted extended Sibuya distribution}

If $p_n$ and $\mathcal{R}(b,\gamma,w)$ are the pmf  the pgf of the extended Sibuya distribution (\ref{g12}), respectively, then $r_{n}=p_{n+1}$  and $\mathcal{R}_0(b,\gamma,w)=\mathcal{R}(b,\gamma,w)/w$, cf. Eq.~(\ref{eq:sh_sbdpgf}), are the pmf and the pgf of the shifted extended Sibuya distribution. Corresponding  function $g(n)$ thus reads
\begin{equation}
\label{gshftsbd}
  g_r(n) = \frac{(n+1) r_{n+1}}{r_{n}}=\frac{n+1}{n+2}\cdot\frac{(n+2)p_{n+2}}{p_{n+1}} =
  \frac{n+1}{n+2}b(n+1-\gamma); \;\; b\leq 1\,
\end{equation}
cf. Eq.~(\ref{g_shsbd}).
The elementary transition amplitudes (\ref{eq:prodgn}) derived from Eq.~(\ref{gshftsbd}) are
\begin{equation}
\label{abshexysbd}
    \frac{\alpha_2}{\beta_1}=b,\;\; \frac{\alpha_1}{\beta_1}=b(3-\gamma),\;\; \frac{\alpha_0}{\beta_1}=b(1-\gamma),\;\; \frac{\beta_0}{\beta_1}=2\,.
\end{equation}

\begin{Theorem}
The shifted extended Sibuya distribution is self-decomposable.
\end{Theorem}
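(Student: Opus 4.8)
The plan is to recognize the shifted extended Sibuya distribution as an exponential (Esscher) tilt of the shifted ordinary Sibuya distribution, and to exploit the fact that such tilts are transparent on the level of the canonical (Lévy) sequence. First I would record the identity, which follows from~(\ref{gshftsbd}), (\ref{eq:prsbd}) and~(\ref{eq:sh_sbdpgf}) by a one-line manipulation,
\begin{equation}
\mathcal{R}_0(b,\gamma,w)=\frac{1-(1-bw)^{\gamma}}{w\bigl(1-(1-b)^{\gamma}\bigr)}=\frac{\mathcal{S}_0(\gamma,bw)}{\mathcal{S}_0(\gamma,b)},\qquad 0<b\le 1 .
\end{equation}
Equivalently, if $s_n$ is the pmf of the shifted Sibuya distribution then the pmf of the shifted extended Sibuya distribution is $r_n=s_n\,b^n/\mathcal{S}_0(\gamma,b)$: thus $\mathcal{R}_0(b,\gamma,\cdot)$ is precisely the renormalized tilt $w\mapsto bw$ of $\mathcal{S}_0(\gamma,\cdot)$, an instance of the map $Q(w)\mapsto Q(bw)/Q(b)$ from the remark after Corollary~\ref{cor:g}.

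Next I would invoke the classical characterization of discrete self-decomposability on $\mathbb{Z}_+$: a pgf $P$ with $P(0)>0$ is infinitely divisible iff $\log P(z)=\sum_{k\ge1}q_k(z^k-1)$ with all $q_k\ge0$, and it is self-decomposable iff moreover the sequence $\{k q_k\}_{k\ge1}$ is non-increasing --- the lattice analogue of the requirement that a self-decomposable law on $\mathbb{R}_+$ have Lévy density $k(x)/x$ with $k$ non-increasing. Since $\log$ of a tilt $Q(bw)/Q(b)$ equals $\sum_{k\ge1}q_k b^k (w^k-1)$ whenever $\log Q(w)=\sum_{k\ge1}q_k(w^k-1)$, the canonical sequence of $\mathcal{R}_0(b,\gamma,\cdot)$ is $\tilde q_k = q_k b^k$, where $q_k$ is that of the shifted Sibuya pgf $\mathcal{S}_0(\gamma,\cdot)$.

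It then remains to combine two monotonicity facts. Because the shifted Sibuya distribution is self-decomposable~\cite{Christoph2000}, its canonical sequence obeys $q_k\ge0$ and $\{k q_k\}$ non-increasing; and for $0<b\le1$ the sequence $\{b^k\}$ is non-negative and non-increasing. A termwise product of two non-negative non-increasing sequences is again non-negative and non-increasing, hence $\{k\tilde q_k\}=\{(k q_k)\,b^k\}$ has the same properties, and the criterion (in its converse direction) gives that $\mathcal{R}_0(b,\gamma,\cdot)$ is a self-decomposable pgf. Note the hypothesis $b\le1$ is used here in an essential way: a tilt with $b>1$ would multiply the canonical sequence by an increasing factor and generally destroy the monotonicity (indeed such a tilt need not even yield a probability distribution).

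The step I expect to need the most care is casting the self-decomposability criterion into a directly usable form, and staying within the range $\gamma\in(0,1)$, $b\in(0,1]$ in which~(\ref{eq:prsbd}) is a bona fide pgf on $\mathbb{Z}_+$ and in which the quoted self-decomposability of the shifted Sibuya applies --- outside the Sibuya regime (e.g.\ $\gamma=-k<0$, the zero-truncated NBD of~(\ref{eq:nbdtrunc})) no such statement is claimed. As a cross-check, one can run the argument through the Laplace representation instead: by~(\ref{eq:lapdensa}) the mixing density of $\mathcal{R}_0(b,\gamma,\cdot)$ is a rescaled and $e^{-cx}$-tilted ($c\ge0$) version of the shifted-Sibuya mixing density~(\ref{eq:sh_sbdfx}); both operations preserve self-decomposability on $\mathbb{R}_+$ (on a Lévy density $k(x)/x$, $k$ non-increasing, they act by rescaling the argument and by multiplication of $k$ by the non-increasing factor $e^{-cx}$), so combined with the standard equivalence between discrete self-decomposability of a mixed-Poisson law and self-decomposability of its mixing law, the claim follows again. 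I would keep the first route as the main line, since it remains within the generating-function formalism used throughout the paper.
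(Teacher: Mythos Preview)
Your argument is correct and takes a genuinely different route from the paper. The paper works directly at the pmf level: it invokes Bondesson's criterion (Lemma~\ref{Bond1}, from~\cite{Bondesson1992-dx}), computes the ratios $r_{n+1}/r_n=b\bigl(1-(1+\gamma)/(n+2)\bigr)$ and both sides of the inequality~(\ref{bondel}) explicitly, and then verifies the inequality by checking that the quotient of the two sides is a decreasing function of $b$ whose value at $b=1$ already exceeds $1$. No reduction to the ordinary shifted Sibuya case is made. Your approach is structural: recognizing $\mathcal{R}_0(b,\gamma,\cdot)=\mathcal{S}_0(\gamma,b\,\cdot)/\mathcal{S}_0(\gamma,b)$ as an Esscher tilt, you transport the problem to the canonical (L\'evy) sequence, where the tilt acts by $q_k\mapsto q_k b^k$; the Steutel--van Harn criterion (discrete self-decomposability $\Leftrightarrow$ $\{kq_k\}$ non-increasing) together with the known self-decomposability of the shifted Sibuya law~\cite{Christoph2000} and the elementary monotonicity of $\{b^k\}$ then finish the job. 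This is shorter, makes the role of the hypothesis $b\le1$ completely transparent, and in fact yields the general statement that tilts $Q(bw)/Q(b)$ with $b\in(0,1]$ preserve discrete self-decomposability. The paper's computation, by contrast, is self-contained---it does not take the $b=1$ result as input---and stays within the pmf-recursion framework used elsewhere in the article. Your Laplace/mixing cross-check via~(\ref{eq:lapdensa}) is also sound (only the easy direction, mixing law self-decomposable $\Rightarrow$ mixed Poisson discrete self-decomposable, is needed) and gives a pleasant second proof.
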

\begin{proof}
 Following \cite{Christoph2000} we use 
\begin{Lemma} {\rm \cite{Bondesson1992-dx}}
\label{Bond1}
A strictly decreasing pmf $r_n,n=0,1,2,\dots$ such that
\begin{equation}
\label{bondel}
\max_{0\leq n\leq j}\frac{r_{n+1}}{r_n} \leq
\frac{j+2}{j+1}\cdot \frac{r_{j+1}-r_{j+2}}{r_{j}-r_{j+1}}\, , \; \; \forall j=0,1,2,\ldots
\end{equation}
is discrete self-decomposable.
\end{Lemma}
 In our case the sequence 
$$\frac{r_{n+1}}{r_n}=\frac{p_{n+2}}{p_{n+1}}=b\left(1-\frac{1+\gamma}{n+2}\right)
$$
 is strictly decreasing and therefore the left hand side of Eq.~(\ref{bondel}) gives:
 $$ \max_{0\leq n\leq j}\frac{r_{n+1}}{r_n} =
 b\left(1-\frac{1+\gamma}{j+2}\right).
 $$
For the right hand side of the inequality (\ref{bondel}) we have
$$ \frac{j+2}{j+1}\cdot \frac{r_{j+1}-r_{j+2}}{r_{j}-r_{j+1}}=
\frac{(j+2)}{(j+1)}\cdot \frac{b(j+1-\gamma)(j+3-b(j+2-\gamma))}{(j+3)(j+2-b(j+1-\gamma))}.
$$
Dividing now the r.h.s. of the Eq.~(\ref{bondel}) by its l.h.s. we obtain 
$$R(b,j)=\frac{j+2}{j+1}\cdot \frac{r_{j+1}-r_{j+2}}{r_{j}-r_{j+1}}\Big /\max_{0\leq n\leq j}\frac{r_{n+1}}{r_n}=\frac{(j+2)^2 (j+3-b(j+2-\gamma))}{(j+1)(j+3)(j+2 -b(j+1-\gamma))}.$$
Now we need to prove that $R(b,j)\geq 1$. It is easy to verify that $R(b,j)$ is decreasing function with respect to $b$ for each fixed $j$. Therefore, its minimum is attained at point $b=1$ where
$$ \frac{j^2+4j+4}{j^2+4j+2}>1. $$
Hence Eq.~(\ref{bondel}) is satisfied and the shifted extended Sibuya distribution is indeed
self-decomposable and thus also infinitely divisible \cite{StvH2004}.
\end{proof}

For $b<1$ the Theorem \ref{eq:binrand} tells us that the scaled factorial moments of the shifted extended Sibuya distribution with the pgf  $\mathcal{R}_0(b,\gamma,w)$ and of its thinned version $\mathcal{R}_0(b,\gamma,1-a+a w) $ are  the same. Moreover, since $\mathcal{R}_0(b,\gamma,w)=\mathcal{S}_0(\gamma,bw)/\mathcal{S}_0(\gamma,b)$, see Eq.~(\ref{eq:prsbd}),  it is easy to check using Eq.~\ref{eq:lapdensa} with $f(x)$ given by Eq.~ (\ref{eq:sh_sbdfx}) that the pgf of the shifted extended Sibuya distribution satisfies the Theorem \ref{thK} and hence the scaling variable $a$ can be any positive number. 

\subsubsection{The shifted generalized Sibuya distribution}
If $p_n$ is the pmf of the generalized Sibuya distribution (\ref{gsbd}) then $r_{n}=p_{n+1}$ is the pmf of the shifted generalized Sibuya distribution with the $g$-function  
\begin{equation}
\label{gshfgsbd}
  g_r(n)=\frac{(n+1)r_{n+1}}{r_n} =\frac{n+1}{n+2}\cdot\frac{(n+2)p_{n+2}}{p_{n+1}} =
  \frac{(n+1)(n+1+\nu-\gamma)}{n+2+\nu}
\end{equation}
and hence
\begin{equation}
\label{abshgensbd}
    \frac{\alpha_2}{\beta_1}=1,\;\; \frac{\alpha_1}{\beta_1}=(3+\nu-\gamma),\;\; \frac{\alpha_0}{\beta_1}=(1+\nu-\gamma),\;\; \frac{\beta_0}{\beta_1}=\nu+2\,.
\end{equation}
Interestingly, the only difference between
the coefficients of the original (\ref{eq:abgensbd}) and the shifted distribution (\ref{abshgensbd}) consists in replacement $\nu \to \nu+1$.

To check the self-decomposability we once again use the Lemma \ref{Bond1} to find that
$$\frac{r_{n+1}}{r_n}=\frac{p_{n+2}}{p_{n+1}}=\frac{n+1+\nu-\gamma}{n+2+\nu}=1-\frac{1+\gamma}{n+2+\nu}
$$
is strictly decreasing. Therefore the l.h.s. of the Eq.~(\ref{bondel}) gives:
 $$ M=\max_{0\leq n\leq j}\frac{r_{n+1}}{r_n} =
 \left(1-\frac{1+\gamma}{j+2+\nu}\right).
 $$
Using Eq.~(\ref{gshfgsbd}) we rewrite the r.h.s. of the Eq.~(\ref{bondel}) as
$$ N=\frac{j+2}{j+1}\cdot\frac{r_{j+1}-r_{j+2}}{r_{j}-r_{j+1}} = \frac{g_r(j)}{j+1}\cdot \frac{j+2-g_r(j+1)}{j+1-g_r(j)} =-\frac{(j+2) (\gamma +2 \nu -1)^2}{(j+1) (j-\nu +2) (j-\nu +3)}$$ 

Dividing now the r.h.s. of the Eq.~(\ref{bondel}) by its l.h.s. and neglecting terms $O\left(j^{-3}\right)$
we observe that there always $\exists j_0>0$ such that $\forall j\geq j_0$  the ratio

$$\frac{N}{M}\approx \frac{(\gamma +2 \nu -1)^2}{j^2} <\frac{(3\nu)^2}{j^2}<1$$ 
where in that last equation we have used the inequality $0< \gamma<\nu +1$. Thus contrary to the previous case the shifted generalized Sibuya distribution is not self-decomposable.

\subsection{Beyond the Sibuya distribution}
Let us consider the following four-parameter generalization of the Sibuya pgf 
\begin{equation}
\label{genkl}
Q(w)=\frac{1-(1-(1-(1-bw)^{\gamma})^{\ell})^k}
{1-(1-(1-(1-b)^{\gamma})^{\ell})^k} =
\frac{1-(1-(H(b,\gamma,w))^\ell)^k}{1-(1-(H(b,\gamma,1))^\ell)^k}
. 
\end{equation} 
In \cite{Kl2021} it was shown that Eq.~(\ref{genkl}) represents a probability generating function in the case of $\gamma =1/m$, $k=m$, $b=1$ where $m, \; \ell \in \mathbb N$. It is not difficult to prove this statement remains true for the case of positive integer $k \leq m$ and $b\in (0,1]$. For the case of $k=\ell=1$ the function $Q(w)$ coincides with probability generating function $\mathcal{R}(w)$ of the extended Sibuya distribution (\ref{eq:prsbd}). For the case of $k=1$  the pgf Eq. (\ref{genkl}) represents  the sum of $\ell$ extended Sibuya-- distributed random variables. In particular, for $\ell=2$, the function $g(n)$ is easily calculable
\begin{equation}
\label{ell2}
    g(n) = \frac{b(n+1)\Bigl( \binom{2 \gamma}{n+1} -2 \binom{\gamma}{n+1}\Bigr)}{2 \binom{\gamma}{n} - \binom{2\gamma}{n}}\, .
\end{equation}
Let us note that for $\gamma\geq 1/2$ the function (\ref{ell2}) is positive and for $n\geq2$ it is close to linear function of $n$  $g(n)\approx a_1(\gamma)(n+a_2(\gamma))$. In particular for $\gamma=1/2$ it coincides with the extended Sibuya distribution (\ref{g12}).

On the other hand for $k=\ell=2$, $m\geq 2$ the ratio $g(n)$ (\ref{eq:prodgn})  takes the form 
\begin{equation}
g(n) =
-b \frac{(1+n) \left(4 \binom{2/m}{1+n}-4 \binom{3/m}{1+n} +\binom{4/m}{1+n}\right)}{2
\left(4 \binom{2/m}{n}-4 \binom{3/m}{n} +\binom{4/m}{n}\right)}.
\end{equation}
Most simple case is $m=2$ with $g(n) = \frac{b}{4}(2n-3)$ which is positive for $n\geq2$. The pmf then reads
\begin{equation}
p_n=p_2\prod_{i=2}^{n-1}\frac{b}{4}\frac{2i-3}{i+1}
=\frac{\alpha_3 n(n\!-\!1)(n\!-\!2)+\alpha_2n(n\!-\!1)}{\beta_2 n(n\!-\!1) }; \;\; \frac{\alpha_3}{\beta_2}=\frac{b}{2},\; \frac{\alpha_2}{\beta_2}=\frac{7b}{4}.
\end{equation} 

\subsection{The extended Sibuya as  progeny}
Consider branching process $(Z_n)_{n\geq 0}^{\infty}$  governed by the pmf $p_k, k\in\mathbb{Z}_+$ \cite{Feller1, Harris2012-dj}.  Markov property of the process determines how the direct descendants of the $n$th generation form the $(n\!+\!1)$st generation. 
If $Z_n$ is the size of the $n$th generation  and $H_n(w)$ is its pgf then rv $Y_n= 1+ Z_1 + \ldots + Z_n$ with the pgf $Q_n(w)$ represents the total number of descendants up to the $n$th generation  including the ancestor (zero generation). By  assumption $Z_0=1$ and  $H_1(w)= H(w)=\sum_{k=0}^{\infty}p_k w^k$ and therefore
\begin{equation}
\label{Qn}
 Q_1(w) = wH(w); ~~~  Q_n(w) = wH ( Q_{n-1}(w)). 
\end{equation}
If $H^{(1)}(1)=\sum_0^{\infty}k p_k \leq 1$ then the rv $Y=\sum_{n=0}^{\infty}Z_n$  is finite and is called progeny \cite{Feller1}. The pgf of  the progeny  then corresponds to a fixed point of Eq.~(\ref{Qn})
\begin{equation}
\label{fxtpt}
  \lim_{n\to\infty}Q_n(w) = Q(w) = wH(Q(w)).
\end{equation}
By inverting the last equation we can express  $H(w)$ in terms of  $Q(w)$
\begin{equation}
\label{progenH}
Q(w)=w H(Q(w));~~ w= Q^{-1}(u) =g(u); \;\; H(u)=\frac{u}{g(u)}.
\end{equation}
Note that although such inversion is in principle always possible the rv $Y$ is the progeny only when all the expansion coefficients of the function $H(u)$  in powers of $u$  are positive. 
In particular, for the case of Sibuya pgf (\ref{eq:sbdpgf}) Eq.~(\ref{progenH}) with $Q(w)=\mathcal{S}(\gamma, w)$ the progeny was shown to exists only for $\frac{1}{2}\leq\gamma<1$ \cite{Letac2019-cm}. 

For the extended Sibuya distribution Eq.~(\ref{progenH}) with $Q(w)=\mathcal{R}(w)$ yields
\begin{eqnarray}
\label{Hu}
     ~~H(u,b,\gamma)=\frac{u b}{1-(1-u c)^{1/\gamma}}; ~ c=1-(1-b)^{\gamma};~~ \gamma \neq 0.
    \end{eqnarray}
Expanding $H(u, b,\gamma)$ in powers of $u$
\begin{equation}
\label{Huexp}
H(u,b,\gamma)= \frac{b \gamma }{c}
-\frac{1}{2} b (\gamma -1) u
-\frac{\left(b c \left(\gamma ^2-1\right)\right)}{12 \gamma }u^2 
-\frac{\left(b c^2 \left(\gamma ^2-1\right)\right)}{24 \gamma }u^3  +O\left(u^4\right).   
\end{equation}
we observe that for $\gamma<-1$ the coefficient $\sim -(\gamma ^2-1)u^2$ becomes negative (taking into account that $b c/\gamma>0$) and for $-1< \gamma<0$ the same is true for the coefficient $\sim -(b c^2 \left(\gamma ^2-1\right))u^3$. 
On the other hand for $\gamma=-1$ with $H(u)=-\frac{b}{c}+b u$ the corresponding Geometric distribution pgf $\mathcal{R}(w,b,-1)$  is progeny.

Further on at $\gamma=0$ 
\begin{equation}
 H(u,b,0)=\frac{bu}{1-(1-b)^u} =
 -\frac{b}{\log (1-b)}+\frac{b u}{2}-\frac{1}{12} u^2 (b \log (1-b))+\frac{1}{720} b u^4 \log ^3(1-b)+O\left(u^5\right)
\end{equation}
it is now coefficient $\sim  \log ^3(1-b) u^4 $ which is negative. 
Finally for  $\gamma>0$  substitution $t=bw\to w$ in Eq.~(\ref{eq:prsbd}) yields the pgf of Sibuya distribution (\ref{eq:sbdpgf}) $\mathcal{S}(t)$ multiplied by positive factor  $1/(1-(1-b)^{\gamma})$ and hence with the identical signs of all expansion coefficients. Therefore the results obtained in \cite{Letac2019-cm} for  $\gamma>0$  are unchanged and extended Sibuya pgf is progeny for $\gamma \in \langle\tfrac{1}{2},1)$ while for $0<\gamma<\tfrac{1}{2}$ it is not. 
In terms of first and second (factorial) moments of the pgf (\ref{Hu})
 \begin{eqnarray}
 \langle k \rangle = H^{(1)}(1,b,\gamma)\!=\!1\!+\!\frac{(1\!-\!b)c}{b\gamma} \leq 1 \\
\langle k(k-1) \rangle = H^{(2)}(1,b,\gamma)\!=\!
 \frac{c (1-b)^{1-2 \gamma } \left[2 (1\!-\!b) (c - b (\gamma +1) (c- \gamma )\right]}{b^2 \gamma ^2}.
 \end{eqnarray}

While in the regions $-1\gamma<0 $ and $0<\gamma<\frac{1}{2}$ the function  $H^{(2)}(1,b,\gamma)$  has non-monotonic dependence on the parameter $b$ with a maximum at $\frac{3}{4}<b_{max}<1$ rapidly moving towards 1, for $\gamma\geq \frac{1}{2}$ it becomes a monotonously increasing function of $b$. The change in the vicinity of $\gamma=\frac{1}{2}$ is quite dramatic:
\begin{equation}
    \lim_{b\to1^-}H^{(2)}(1,b,\gamma < \tfrac{1}{2})= 0;~~\lim_{b\to1^-}H^{(2)}(1,b,\gamma \geq \tfrac{1}{2})= \infty.
\end{equation}

\section{Conclusion}
The goal of this article was to discuss the properties of Sibuya-like random variables related to their scaling and birth-death processes. We have shown that their pmf satisfy a one-step recurrence relation $g(n)=(n+1)p_{n+1}/p_n$ with some of them 
representing the stationary solutions of the birth-death equations and most of them being self-decomposable.
In addition to already known Sibuya-like distribution (shifted Sibuya, generalized Sibuya, disctere stable) we have found  a new one - extended Sibuya distribution. The latter is very similar to ordinary Sibuya distribution having the same bounds of its validity as a progeny in branching process. 

\section*{Acknowledgements}
  The work of Lev B. Klebanov was partially supported by Grant 19-28231X GA \v{C}R.  Michal \v{S}umbera is partially supported by the grants LTT17018 and LTT18002 of the Ministry of Education of the Czech Republic.

\bibliographystyle{authordate1}
\bibliography{main}
\end{document}